\numberwithin{equation}{section}
\theoremstyle{plain}
\newtheorem{theorem}{Theorem}[section]
\newtheorem{proposition}[theorem]{Proposition}
\newtheorem{lemma}[theorem]{Lemma}
\newtheorem{corollary}[theorem]{Corollary}
\theoremstyle{definition}
\newtheorem{remark}[theorem]{Remark}
\renewcommand{\cite}{\citet*}
\def\^#1{\ifmmode {\mathaccent"705E #1} \else {\accent94 #1} \fi}
\def\~#1{\ifmmode {\mathaccent"707E #1} \else {\accent"7E #1} \fi}
\def\*#1{#1^\ast}
\edef\-#1{\noexpand\ifmmode {\noexpand\bar{#1}} \noexpand\else
\-#1\noexpand\fi}\def\>#1{\vec{#1}}
\def\.#1{\dot{#1}}
\def\atop{\@@atop}
\def\%#1{\mathcal{#1}}
\renewcommand{\leq}{\leqslant}
\renewcommand{\geq}{\geqslant}
\renewcommand{\phi}{\varphi}
\newcommand{\eps}{\varepsilon}
\newcommand{\eq}{\eqref}
\newcommand{\dk}{\mathop{d_{\mathrm{Kol}}}}
\newcommand{\bigo}{\mathrm{O}}
\def\tsfrac#1#2{{\textstyle\frac{#1}{#2}}}
\newcommand{\toinf}{\to\infty}
\newcommand{\Bi}{\mathop{\mathrm{Bi}}}
\newcommand{\GG}{\mathop{\mathrm{GGa}}}
\newcommand{\Ga}{{\mathrm{Gamma}}}
\newcommand{\IE}{\mathbbm{E}}
\newcommand{\IP}{\mathbbm{P}}
\newcommand{\law}{\mathscr{L}}
\newcommand{\eqlaw}{\stackrel{\mathscr{D}}{=}}
\newcommand{\IZ}{\mathbbm{Z}}
\newcommand{\IR}{\mathbbm{R}}
\newcommand{\I}{{\mathrm{I}}}
\def\be#1{\begin{equation*}#1\end{equation*}}
\def\ben#1{\begin{equation}#1\end{equation}}
\def\bes#1{\begin{equation*}\begin{split}#1\end{split}\end{equation*}}
\def\besn#1{\begin{equation}\begin{split}#1\end{split}\end{equation}}
\def\ba#1{\begin{align*}#1\end{align*}}
\def\klr#1{(#1)}
\def\bklr#1{\bigl(#1\bigr)}
\def\bbklr#1{\Bigl(#1\Bigr)}
\def\kle#1{[#1]}
\def\bkle#1{\bigl[#1\bigr]}
\def\bbkle#1{\Bigl[#1\Bigr]}
\def\bbbkle#1{\biggl[#1\biggr]}
\def\klg#1{\{#1\}}
\def\bklg#1{\bigl\{#1\bigr\}}
\def\bbklg#1{\Bigl\{#1\Bigr\}}
\def\bbbklg#1{\biggl\{#1\biggr\}}
\def\norm#1{\Vert#1\Vert}
\def\abs#1{\vert#1\vert}
\def\babs#1{\bigl\vert#1\bigr\vert}
\def\bbbabs#1{\biggl\vert#1\biggr\vert}
\def\bbbmid{\biggm\vert}
\def\floor#1{{\lfloor#1\rfloor}}
\def\ceil#1{{\lceil#1\rceil}}
\def\ellp{p}
\def\now{%
\minute=\time%
\hour=\time \divide \hour by 60%
\hourMins=\hour \multiply\hourMins by 60%
\advance\minute by -\hourMins%
\zeroPadTwo{\the\hour}:\zeroPadTwo{\the\minute}%
}
\def\zeroPadTwo#1{\ifnum #1<10 0\fi#1}
\renewcommand\section{\@startsection {section}{1}{\z@}%
{-3.5ex \@plus -1ex \@minus -.2ex}%
{1.3ex \@plus.2ex}%
{\center\small\sc\MakeTextUppercase}}
\def\subsection#1{\@startsection {subsection}{2}{0pt}%
{-3.5ex \@plus -1ex \@minus -.2ex}%
{1ex \@plus.2ex}%
{\bf\mathversion{bold}}{#1}}
\def\subsubsection#1{\@startsection{subsubsection}{3}{0pt}%
{\medskipamount}%
{-10pt}%
{\normalsize\itshape}{\kern-2.2ex. #1.}}
\def\blfootnote{\xdef\@thefnmark{}\@footnotetext}
\def\cF{\mathcal{F}}
\def\B{\mathrm{B}}
\def\ed{\eqlaw}
\def\B{{\mathrm{Beta}}}
\def\bPclas#1#2#3{{\mathcal{P}\bklr{{\textstyle{#1\atop #2}};#3}}}
\def\bbPclas#1#2#3{{\mathcal{P}\bbklr{{\textstyle{#1\atop #2\strut}};#3}}}
\def\bPimm#1#2#3#4{{\mathcal{P}_{\mathrm{Im}}^#1\bklr{{\textstyle{#2\atop #3}};#4}}}
\def\bbPimm#1#2#3#4{{\mathcal{P}_{\mathrm{Im}}^#1\bbklr{{\textstyle{#2\atop #3\strut}};#4}}}
\begin{document}

\title{\sc\bf\large\MakeUppercase{
Joint degree distributions of preferential attachment random graphs
}}
\author{\sc Erol Pek\"oz\thanks{Boston University, Questrom School of Business, 595 Commonwealth Avenue, Room 607
Boston, MA 02215, USA; \texttt{pekoz@bu.edu}}, Adrian R\"ollin\thanks{National University of Singapore, Department of Statistics and Applied Probability,
6 Science Drive 2,
Singapore, 117546, Singapore; \texttt{adrian.roellin@nus.edu.sg}}
and Nathan Ross\thanks{University of Melbourne, School of Mathematics and Statistics, Richard Berry Building,
University of Melbourne, VIC, 3010, Australia; \texttt{nathan.ross@unimelb.edu.au}}
\date{
}
}
\maketitle

\begin{abstract}
We study the joint degree counts in linear preferential attachment random graphs and find a simple representation for the limit distribution in infinite sequence space. We show weak convergence with respect to the~$p$-norm topology for appropriate~$p$ and also provide optimal rates of convergence of the finite dimensional distributions. The results hold for models with any general initial seed graph and any fixed number of initial outgoing edges per vertex; we generate non-tree graphs using both a lumping and a sequential rule. Convergence of the order statistics and optimal rates of convergence to the maximum of the degrees is also established.
\end{abstract}

\noindent\textbf{Keywords:}  Preferential attachment random graph; multicolor P\'olya urns, distributional approximation.

\section{Introduction}

Preferential attachment random graph models
have become extremely popular in the fifteen years since they were studied by \cite{Barabasi1999}.
In the basic models, nodes are sequentially added to the network over
time and connected randomly to existing nodes such that connections
to higher degree nodes are more likely.  The literature around these models
has become too vast to survey, but \cite{Hofstad2013}, \cite{Newman2003} and \cite{Newman2006}
provide good overviews.

The most popular models are those similar to \cite{Barabasi1999}, in which
nodes are added sequentially and attach to exactly one randomly chosen existing node,
and the chance a new node connects to an existing node is proportional to its degree; 
see \cite{Collevecchio2013}, \cite{Krapivsky2000}, and \cite{Rudas2007} for
results on more general attachment rules. The model is typically generalized
to allow for vertices to have~$\ell\geq1$ initial edges by starting with the previous model and then, for~$k=0,1,2,\ldots$,
lumping vertices~$k\ell+1, k\ell+2, \ldots, (k+1)\ell$ into
a single vertex (possibly causing loops). The most studied feature of these objects is
the distribution of the degrees of the nodes; that is, the proportion of nodes that have degree~$k$ as
the graph grows large. The basic content of \cite{Barabasi1999} and the rigorous formulation
of \cite{Bollobas2001} is that, as~$k\to\infty$, this distribution
roughly decays proportional to~$k^{-\gamma}$ for some~$\gamma>0$; this is the so-called power law behavior.

In this article we study the joint degree distribution 
for a linear preferential attachment model where each entering node initially attaches to exactly
$\ell \geq 1$ nodes. In the case~$\ell\geq 2$, we study two mechanisms for attaching edges, a sequential
update rule and the lumping rule mentioned above (and note our results are strikingly different for the
two rules).
We show weak convergence with respect to~$p$-norm topology 
of the scaled degrees for the process started
from any initial ``seed" graph to a limiting distribution that has a simple representation
and provide an optimal rate of convergence for the finite dimensional distributions.

To state our results in greater detail, we first precisely define the random rules governing the evolution of the models we study.
We distinguish between two cases: one that allows for loops and
the other that does not; in fact, the two can be related (see Lemma~\ref{lemlink} below), but we present the results separately for
the sake of clarity.

Our results are stated in terms of weights of vertices, but our weights
can be thought of as in-degree plus one since in the models, vertices
are born with weight one, and
each time a vertex receives a new edge from another vertex, its weight increases by one.
Fix~$\ell\geq 1$ and let~$d_k(n)$ denote the weight of vertex~$k$ in the graph~$G(n)$.
Assume that the seed graph~$G(0)$ has~$s$
vertices with labels~$1,\ldots,s$ and with initial weights~$d_1,\dots,d_s$ (note that~$d_i=d_i(0)$).
We construct the graph~$G(n)$ with~$n$ vertices from~$G(n-1)$ having~$n-1$~edges in two possible ways.

\paragraph{Model~N$_{\boldsymbol{\ell}}$.}
Given the graph~$G(n-1)$ having~$s+n-1$ vertices,
$G(n)$ is formed by adding a vertex labeled~$s+n$ and sequentially attaching~$\ell$
edges between it and the
vertices of~$G(n-1)$ according to the following rules. The first edge attaches
to vertex~$k$ with
probability
\be{
\frac{d_k(n-1)}{\sum_{i=1}^{s+n-1}d_i(n-1)}, \qquad 1\leq k\leq n-1;
}
denote by~$K_1$ the vertex which received that first edge. The weight of~$K_1$ is updated immediately, so that
the second edge attaches to vertex~$k$ with probability
\be{
\frac{d_{k}(n-1)+\I[k=K_1]}{1+\sum_{i=1}^{s+n-1}d_i(n-1)}, \qquad 1\leq k\leq n-1.
}
The procedure continues this way,
edges attach with probability proportional to weights at that moment, and additional
received edges add one to the weight of a vertex, until vertex~$n$ has~$\ell$ outgoing edges.
Lastly, we set~$d_{s+n}(n)=1$, and let~$G(n)$ be the resulting graph. Note that multiple
edges between vertices are possible.

\paragraph{Model~L$_{\boldsymbol{\ell}}$.}
Given the graph~$G(n-1)$ having~$s+n-1$ vertices,
$G(n)$ is formed by adding a vertex labeled~$s+n$ having weight~$d_{s+n}(n-1)=1$
and attaching~$\ell$ edges between it and the vertices labelled ~$\{1, \ldots, s+n\}$
(so loops are possible). The first edge attaches to vertex~$k$ with probability

\be{
\frac{d_k(n-1)}{\sum_{i=1}^{s+n}d_i(n-1)}; \qquad 1\leq k\leq n;
}
denote by~$K_1$ the vertex which received that first edge.
As in model N$_\ell$, the weight of~$K_1$ is updated immediately, and the procedure continues in this way until~$\ell$ edges are added
and we call~$G(n)$ the resulting graph.

\bigskip

Note that if~$\ell=1$, then the weights can be interpreted as total degree since each additional vertex
has ``out-degree" one and in this case the~$N_1$ model is just the usual Barab{\'a}si-Albert preferential attachment tree.

In the next section we state our finite dimensional results; process level 
statements are in Section~\ref{sec:introproc}.

\subsection{Finite dimensional degree distributions}

Key to both understanding and obtaining our results
is to focus on the joint cumulative degree counts rather than the joint degree counts. 
Write
$X\sim\GG(a,b)$ with~$a>0$ and~$b>0$ for a random variable~$X$ having the
generalized gamma distribution with density proportional to~$x^{a-1}e^{-x^b}$ on~$x>0$,
and~$Y\sim\B(a,b)$ for~$a>0$ and~$b>0$ if
$Y$ has density proportional to~$x^{a-1}(1-x)^{b-1}$ on~$0<x<1$.

\begin{theorem}\label{THM2}
Fix a seed graph~$G(0)$ having~$s$ vertices and  weight sequence~$d_1,\dots,d_s$, and let~$m_k=\sum_{i=1}^k d_i$ for~$1\leq k\leq s$.
Assume that either
\begin{enumerate}[$(i)$]
\item~$G(n)$ follows Model~N$_\ell$, in which case let~$a_k=m_s+(\ell+1)(k-s)+\ell$ for~$k\geq s$; or
\item~$G(n)$ follows Model~L$_\ell$, in which case let~$a_k=m_s+(\ell+1)(k-s)$ for~$k\geq s$.
\end{enumerate}
Fix~$r \geq s$ and let~$B_1,\dots,B_{r-1}$ and~$Z_r$ be independent random variables with distributions
\be{
	B_k\sim  \begin{cases}
	\B(m_{k},d_{k+1}) & \text{if~$1\leq k < s$,} \\
	\B(a_k,1) & \text{if~$s\leq k\leq r$,}
	\end{cases}
}
and~$Z_r \sim \GG(a_r,\ell+1)$. Define the products
\be{
	Z_k = B_{k}\cdots B_{r-1} Z_r, \quad 1\leq k < r, 
}
and let~$Z=(Z_1,\dots,Z_r)$ and~$Y = (Z_1,Z_2-Z_1,\dots,Z_r-Z_{r-1})$.
Denote the scaled weight sequence of the first~$r$ vertices of~$G(n)$ by
\be{
	D(n)=\bklr{D_1(n),\ldots, D_r(n)}=\frac{1}{(\ell+1)n^{\ell/(\ell+1)}}\bklr{d_1(n),\ldots, d_r(n)}.
}
Then there is a positive constant~$C = C(r,\ell,m_s)$ such that
\be{
	\sup_{K}\babs{\IP[D(n)\in K]-\IP[Y\in K]} \leq \frac{C}{n^{\ell/(\ell+1)}}
}
for all~$n\geq 1$, where the supremum ranges over all convex subsets~$K\subset\IR^r$.
\end{theorem}

\begin{remark}
The error rate~$n^{-\ell/(\ell+1)}$ is best-possible since the rate of convergence of a scaled integer valued random variable
to a limiting distribution with nice density is bounded from below by the scaling (nice means uniformly bounded away from zero on some interval; see
\cite[Lemma~4.1]{Pekoz2013}). Also, a bound on the constant
$C(r,\ell,m_s)$ could in principle be made explicit with our methods, but with much added technicality. Such a constant would increase in each of its arguments.
We emphasize that obtaining optimal rates in multidimensional distributional limit theorems
for the metric we are using is in general neither easy nor common.
\end{remark}

Since the sets~$\{(x_1,\ldots, x_r):\max\{ x_1,\ldots, x_r\} \leq t\}$ are convex in~$\IR^r$, we immediately obtain the following corollary to the theorem.

\begin{corollary}\label{cor1}
Let\/~$D$ and\/~$Y$ be as in Theorem~\ref{THM2} under either~$(i)$ or~$(ii)$. Then, there is a positive constant~$C=C(r,\ell,m_s)$, such that
\be{
\sup_{t\geq 0}\babs{\IP\bkle{\max\nolimits_{1\leq k\leq r} D_k(n) \leq t}- \IP\bkle{\max\nolimits_{1\leq k\leq r} Y_k\leq t}}\leq \frac{C}{n^{\ell/(\ell+1)}}
}
for all~$n\geq 1$.
\end{corollary}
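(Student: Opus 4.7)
The plan is to deduce the corollary directly from Theorem~\ref{thm2} by specializing the convex set~$K$ to a family indexed by~$t$, so essentially no new work is required beyond verifying convexity and taking a supremum.

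First I would fix~$t\geq 0$ and set
\be{
K_t=\klg{(x_1,\ldots,x_r)\in\IR^r:\max\nolimits_{1\leq k\leq r}x_k\leq t}=(-\infty,t]^r.
}
This set is a Cartesian product of half-lines, hence convex as an intersection of the half-spaces $\{x_k\leq t\}$ for $k=1,\ldots,r$. Since $\max_k D_k(n)\leq t$ is the event $\{D(n)\in K_t\}$, and likewise for~$Y$, we have
\be{
\babs{\IP\bkle{\max\nolimits_{1\leq k\leq r}D_k(n)\leq t}-\IP\bkle{\max\nolimits_{1\leq k\leq r}Y_k\leq t}}=\babs{\IP[D(n)\in K_t]-\IP[Y\in K_t]}.
}

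Next I would take the supremum over $t\geq 0$ on both sides. Since each $K_t$ is convex, the right-hand side is bounded above by the supremum appearing in Theorem~\ref{thm2}, which ranges over \emph{all} convex subsets of~$\IR^r$. Invoking Theorem~\ref{thm2} then yields the bound $C(r,\ell,m_s)/n^{\ell/(\ell+1)}$ uniformly in~$t$, giving the corollary.

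Since the argument is a one-line specialization, there is no real obstacle; the only point worth flagging is that the constant~$C$ transfers verbatim from the theorem, and that extending the supremum from $t\geq 0$ to all of~$\IR$ would still be covered because the corresponding $K_t$ remain convex (albeit possibly empty) for $t<0$.
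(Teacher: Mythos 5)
Your argument is exactly the paper's: the sets $K_t=(-\infty,t]^r$ are convex, so the corollary is an immediate specialization of Theorem~\ref{thm2}, with the constant carried over unchanged. The proof is correct and there is nothing to add.
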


The representation of the limits appearing in Theorem~\ref{THM2}
is a sort of ``backward" construction started from~$Z_r$. Our next result is 
an alternative ``forward" construction that is useful
for describing the infinite limit~$\law(Z_1,Z_2,\ldots)$; cf.\ the forthcoming process level discussion.
Denote by~$\Ga(r,\lambda)$ a gamma distribution with shape parameter~$r>0$ and rate parameter~$\lambda>0$
having density proportional to~$x^{r-1} e^{-\lambda x}$,~$x>0$.

\begin{proposition}\label{thm1} 
Let~$B_1,\ldots, B_{s-1}$ and~$a_s$ be as in Theorem~\ref{THM2},
and let~$X_1,X_2,\ldots$ be independent with 
$X_1\sim \Ga(a_s/(\ell+1),1)$ and~$X_k\sim\Ga(1,1)$,~$k\geq 2$.
Then the random vector~$Z$ in Theorem~\ref{THM2}
has representation~$Z\eqlaw \tilde{Z}$, where
\be{
	\tilde{Z_k} =  \begin{cases}
	B_{k}\cdots B_{s-1} \, X_1^{1/(\ell+1)} & \text{if~$1\leq k < s$,} \\
	(X_1+\cdots+X_{k-s+1})^{1/(\ell+1)} & \text{if~$k\geq s$.}
	\end{cases}
}
\end{proposition}
\begin{proof}
We need to check that  the representation above has the
same distribution as that of Theorem~\ref{THM2}.
The two representations are identical for~$k=1,\ldots, s-1$.
For~$k\geq s$, that the joint distributions continue to agree
is an easy consequence of induction and the beta-gamma algebra which implies that for 
$k>s$,
\bes{
(X_1+\cdots+X_{k-s})^{1/(\ell+1)}&=\left(\frac{X_1+\cdots+X_{k-s}}{X_1+\cdots+X_{k-s+1}}\right)^{1/(\ell+1)}\!\!\cdot\left(X_1+\cdots+X_{k-s+1}\right)^{1/(\ell+1)}\\
&\ed V^{1/(\ell+1)} Z_k,
}
where~$V\sim\B(\tsfrac{a_s}{\ell+1}+k-s-1,1)$ is independent of~$Z_k$. A simple calculation
shows that~$V^{1/(\ell+1)}\sim \B(a_s+(\ell+1)(k-s-1),1)$ which is the same distribution as~$B_{k-1}$.
Continuing in this way yields the corollary.
\end{proof}

\begin{remark}
Proposition~\ref{thm1} 
leads to rather clean representations of the limits appearing in Theorem~\ref{THM2}
for particular choices of seed graph.
If
 the seed graph~$G(0)$ has one vertex with~$d_1=\ell+1$ 
 and~$G(n)$ is formed according to Model~L$_\ell$ (alternatively the seed graph has two vertices with~$d_1=\ell+1$ and~$d_2=1$,
 following Model~N$_\ell$), then Proposition~\ref{thm1} 
implies that~$0<Z_1<Z_2<\cdots$ are the points of an inhomogeneous Poisson point process
on the positive line with intensity
$(\ell+1)t^\ell dt$.
\end{remark}

\subsection{Process level convergence and order statistics}\label{sec:introproc}

Using Kolmogorov's extension theorem, for any choice of seed graph and generative mechanism,
the distribution of the vector~$Y$ in Theorem~\ref{THM2} can be uniquely extended to obtain an infinite vector~$\tilde Y=(Y_1,Y_2,\dots)$ which we can take to be of the form~$Y_i=Z_i-Z_{i-1}$
for~$0<Z_1<Z_2<\cdots$ given explicitly by the forward construction of Proposition~\ref{thm1}.
It is immediate from Theorem~\ref{THM2} that~$\tilde D(n)=(D_1(n),\dots,D_n(n),0,\dots)$,
viewed as an infinite sequence by appending zeros, converges weakly to~$\tilde Y$ with respect to the product topology. 
However, weak convergence with respect to this topology does not yield much at the process level (for example, 
restricting to bounded sequences, it does not imply convergence of the sequence of maximums to the maximum of the limit), and so we show weak convergence with respect to a topology that is strong enough to imply weak convergence of order statistics. 

\def\da{\downarrow}
For~$1\leq p<\infty$, let~$l_p$ be the usual sequence-space endowed with the norm~$\norm{\cdot}_p$. Moreover, denote by~$l_p^+\subset l_p$ the subspace of sequences having non-negative entries along with the topology inherited from~$l_p$. Clearly,~$\tilde D(n)\in l_p^+$ for all~$n\geq 1$. For~$x\in l_p^+$, denote by~$x^\da$ the sequence with the entries of~$x$ appearing in decreasing order; clearly~$x^\da\in l_p^+$.

We have the following result
which greatly generalizes the convergence of the maximum of the first~$r$ coordinates of~$D(n)$
given in Corollary~\ref{cor1} to the order statistics of the \emph{entire} sequence~$\tilde D(n)$.

\begin{theorem}\label{THM:ORDERSTAT} 
Let~$\tilde Y$ and~$\tilde D(n)$ be the infinite extensions of the random vectors~$Y$ 
and~$D(n)$ of Theorem~\ref{THM2} as just described.
Then~$\tilde Y\in l_p^+$ almost surely for any~$p>(\ell+1)/\ell$. Moreover, for~$\ell=1$ and any~$p\geq 4$,
or~$\ell\geq 2$ and any~$p\geq \ell+1$, the sequences~$\law(\tilde D(n))$ and~$\law(\tilde D(n)^\da)$
converges weakly to~$\law(\tilde Y)$ and~$\law(\tilde Y^\da)$ with respect to the~$l_p$-topology. 
\end{theorem}

\subsection{Idea of the proofs of Theorem~\ref{THM2} and Theorem~\ref{THM:ORDERSTAT}.}
The representation of the limit vector in Theorem~\ref{THM2} is integral to our approach. It says that in the limit for~$k>s$,
conditional on coordinate~$Z_{k+1}$, the previous coordinate~$Z_k$ is an independent beta variable multiplied by~$Z_{k+1}$.
On the other hand, for~$k>s$, conditional on the sum of the weights
of the first~$k+1$ vertices, say~$S_{k+1}(n)$, it is not too difficult to see that the sum of the weights of the first~$k$ vertices,~$S_k(n)$,
will be distributed as a classical P\'olya urn, run for a number of steps of the order~$S_{k+1}(n)$ (see Lemma~\ref{lem1} below).
 Furthermore, P\'olya urns limit to beta variables, so~$\law\bklr{S_{k}(n)}\approx \law\bklr{B_k S_{k+1}(n)}$, where~$B_k$ is an
 appropriate beta variable
independent of~$S_{k+1}(n)$ (see Lemma~\ref{lem2} below, an extension of \cite[Lemma~4.4]{Pekoz2013b}, which gives a new bound on the Wasserstein distance between P\'olya urns and beta distributions). The limits satisfy this approximate identity exactly; that is,
$Z_{k}=B_k Z_{k+1}$.  Once we take care of the error made in swapping out P\'olya urns for betas (done via a telescoping sum argument),
all that is left is to show that~$Z_r$ is close to~$S_r(n)$, which follows by results of \cite{Pekoz2013b}.
For Theorem~\ref{THM:ORDERSTAT}, we  establish
 weak convergence of the distribution of the sequence~$\tilde D(n)$
 to the distribution of~$\tilde Y$ with respect to~$l_p$ topology 
 by verifying a tightness criterion for probability measures on~$l_p$,
 and then applying Theorem~\ref{THM2} to show the convergence
of finite dimensional distributions.
The convergence of the distribution of
$ \tilde D^{\da}(n)$ follows from
the continuous mapping theorem applied to the ordering function.

\subsection{Related work}

To our knowledge all of the results above for~$\ell\geq 2$ are new.
The sequential-edge model is studied by \cite{Berger2014} where they give a beautiful
description of the limiting structure of the graph started from a random vertex and performing a depth first
search. That work is complementary to ours: as the number of vertices~$n\to\infty$,
the depth first search started from a uniformly chosen vertex only sees vertices of order~$n$,
and these do not appear in our limits.

For~$\ell=1$, 
the limiting marginal distribution of the degree of the~$i$th vertex 
 are described in different ways in  \cite{Janson2006} (after relating these variables to an appropriate triangular urn model) and \cite[Theorem 1.1, Proposition 2.3]{Pekoz2013}. 
 The latter's Models~1 and~2 are our Models N$_1$ with~$s=2, d_1=d_2=1$ and L$_1$ with~$s=1, d_1=2$, respectively,
and their scaling is a constant times~$\sqrt{n}$. For Model~N$_1$, they identify the distributional limit of~$D_i(n)$ for~$i\geq 2$ (note that Vertex 1 and 2 have the same distribution in this model)
as~$\klr{ \Gamma_1 B_{1/2, i-3/2}}^{1/2}$, where~$\Gamma_1\sim\Ga(1,1)$ is independent of~$B_{1/2, i-3/2}\sim\B(1/2, i-3/2)$.
In Model~L$_1$, the analogous limit can be written as~$\klr{ \Gamma_1 B_{1/2, i-1}}^{1/2}$ (interpreting~$B_{1/2,0}=1$).
Using our description of these limits given by Proposition~\ref{thm1}, we find that
for~$X_{1/2}\sim\Ga(1/2,1)$ independent of~$X_1,X_2,\ldots$, i.i.d.\ 
random variables with distribution~$\Ga(1,1)$,
\ba{
\sqrt{X_{1/2}+X_1+\cdots+X_{i-1}}-\sqrt{X_{1/2}+X_1+\cdots+X_{i-2}}&\ed \sqrt{ \Gamma_1 B_{1/2, i-3/2}}, \\
\sqrt{X_{1}+X_2+\cdots+X_i}-\sqrt{X_{1}+X_2+\cdots+X_{i-1}}&\ed \sqrt{ \Gamma_1 B_{1/2, i-1}}.
}
These intriguing distributional identities are not easily interpretable, but they can be directly verified by comparing Mellin transforms.
It would be of interest to obtain a representation of the joint distributions similar in appearance to the right hand side of these identities.

Still assuming~$\ell=1$,
there are some  existing results about properties and characterizations of limits of the joint 
degrees of fixed vertices and maximums of such, but only started from
certain seed graphs. Especially close to our work
in this special case is \cite{Mori2005}, who
uses martingale arguments to show that~$(2\tilde D(n))_{n\geq1}$, in Model~N$_1$ 
with~$s=2$ and~$d_1=d_2=1$, has almost sure limit denoted
$(\zeta_1,\zeta_2,\ldots, )$, which must have the same distribution 
as~$2\tilde Y$, though the description of the limiting~$\zeta_i$ previously given
is not so explicit: moment formula are given (and can be checked
to agree with those of~$2 \tilde Y$) and some other 
properties are derived. For example,   
\cite[Lemma 3.4 with~$\beta=0$]{Mori2005} shows that the variables
\be{
\tau_{j}:=\frac{\zeta_1+\cdots+\zeta_{j-1}}{\zeta_1+\cdots+\zeta_{j}}
}
are~$\B(2j-1,1)$ and that~$\tau_1,\dots,\tau_r, \zeta_1+\cdots+\zeta_r$ are independent. 
This coincides with our description of the limits in Theorem~\ref{THM2}.
See also \cite[Section~2]{James2015} for discussion of these representations in this and related models.
Again using martingales, \cite[Theorem~3.1]{Mori2005} 
shows~$(\max_{i\geq 1} D_i(n))_{n\geq 1}$
converges almost surely and in~$L_p$ for~$p\geq 1$ to~$\max_{i\geq1} \zeta_i$.
Thus we can  identify the distribution of this limit as that of~$2\max_{i\geq1} Y_i$.

Our work generalizes and extends these existing results for the~$\ell=1$
case in several directions. The rates of
convergence in 
Theorem~\ref{THM2} and Corollary~\ref{cor1} are new, as are
the descriptions of the limiting joint degrees, even for simple
seed graphs where existing results are available. 
At the process level, our~$l_p$ convergence complements 
the almost sure convergence of the sequence and maximum
given by \cite{Mori2005} for the basic seed graph, as well 
as covering much more, allowing different seed graphs and
giving convergence of the order statistics.

\paragraph{A different multi-edge preferential attachment graph.}
An alternative way to define a preferential attachment model where each new node attaches to~$\ell>1$ nodes
is given by \cite{Bollobas2001}.
The model begins by generating a random graph according to Model~L$_1$ or~N$_1$ with~$n\ell$ nodes, denoted~$G(n\ell)$, and
then for each of~$i=1,\ldots, n$, collapsing nodes~$(i-1)\ell+1, \ldots, i\ell$ into one node keeping all of the edges (so there may be loops in both models).
But with this definition the degree of the~$i$th node 
is just the sum of the degrees of nodes~$(i-1)\ell+1, \ldots, i\ell$
in~$G(n\ell)$ and so the finite dimensional limits can be read from Theorems~\ref{THM2}.
 Moreover, since a linear transformation
of a convex set is convex, the analogous error rates of the theorem in this more general setting also hold.
An import remark is that this multi-edge preferential attachment graph is fundamentally different than that introduced above:
in this model the degree of a fixed vertex grows like~$\sqrt{n}$ for any~$\ell$ where as in Models~N$_\ell$ and~L$_\ell$ the degree grows like~$n^{\ell/(\ell+1)}$.
Also note that using the representation of Proposition~\ref{thm1},
summing the limiting distributions of the degrees of adjacent vertices has a particularly simple form due to telescoping.
For example for Model~L$_1$ started from a single loop (so~$s=1$ and~$d_1=2$), the joint distributional limits of the scaled degrees of nodes~$i=1,\ldots,r$ in~$G(n)^{(\ell)}$
are given by
\be{
\sqrt{X_1+\cdots+X_{\ell i}}-\sqrt{X_1+\cdots+X_{\ell (i-1)}}
}
where~$X_1,X_2,\ldots$ are i.i.d.\ distributed~$\Ga(1,1)$.

Similarly, at the process level, the lumping operation maps~$l_p^+$ to~$l_p^+$
since
by H\"older's inequality, for numbers~$x_1,\ldots, x_\ell$, 
\be{
\abs{x_1+\cdots+x_\ell}^p\leq \ell^{p-1} (\abs{x_1}^p+\cdots+\abs{x_\ell}^p).
}
The lumping is also Lipschitz continuous since for~$x,y$ in~$l_p^+$, 
\be{
\sum_{i\geq 1} \bbbabs{\sum_{j=1}^\ell x_{ (i-1) \ell+j}-\sum_{j=1}^\ell y_{(i-1)\ell +j}}^p
\leq \sum_{i\geq 1} \sum_{j=1}^\ell \babs{x_{ (i-1) \ell+j}- y_{(i-1)\ell +j}}^p.
}
So by the continuous mapping theorem, we can easily read the limits
of the degrees for the lumped graph from those of the original graph
given in Theorem~\ref{THM:ORDERSTAT}. 
We don't provide a formal statement of our results for this case because it's a
straightforward derivative of the~$\ell=1$ case.

\paragraph{Connection to Brownian CRT}
Consider Model~L$_1$ started from a loop (so~$s=1$ and~$d_1=2$). If we write~$S_i(n)=\sum_{j=1}^i D_j(n)$, then Proposition~\ref{thm1}
and the results of \cite{Mori2005} discussed above
imply that for~$r\geq1$, the scaled sums of degree counts
\be{
(S_1(n),\ldots, S_r(n))\stackrel{a.s.}{\longrightarrow} (\sqrt{X_1},\sqrt{X_1+X_2}, \ldots, \sqrt{X_1+\cdots+X_r}),
}
where the~$X_i$ are i.i.d.\ and have distribution~$\Ga(1,1)$.
These are the points of an inhomogeneous Poisson point process with intensity~$ 2t\,dt$, which
also arises in Aldous's CRT construction (see \cite{Aldous1991, Aldous1993}), 
and is described around \cite[Theorem~7.9]{Pitman2006}. 
The explicit connection is that if we consider~$G(n)$ plus the not yet attached half edge of vertex~$n+1$,
then there are~$2n+1$ ``degrees" which can be bijectively mapped to a binary tree with~$n+1$ leaves.
The bijection is defined through R\'emy's algorithm for generating uniformly chosen binary plane trees (see the discussion \cite[Remark 2.6]{Pekoz2013b}). The algorithm begins with a binary tree with two leaves and a root, corresponding to the two starting degrees of the loop and the half edge of the second vertex in Model~L$_1$. In R\'emy's algorithm, leaves are added to the tree by selecting a (possibly internal) vertex uniformly at random and and inserting a cherry at this vertex (that is, insert a
graph with three vertices and two edges with the ``elbow" oriented towards the root of the binary tree),
so two vertices (one of which is a leaf) are added at each step and these correspond to the two degrees of each edge added in the preferential attachment model. The number of vertices in the spanning tree of the first~$k$ leaves added in R\'emy's algorithm
is exactly the sum of the degrees of the first~$k$ vertices in Model~L$_1$ started from a loop.
If the leaves are labeled in the order they appear (the initial two leaves labeled~$1$ and~$2$),
then this leaf labeling is uniform which implies that if we first choose a uniform random binary plane tree with~$n \geq 2~$ leaves and then~$k\leq n$ leaves uniformly at random and fix a labeling~$1,\dots, k$, then for~$T_j(n)$ defined to be the number of vertices in the spanning tree containing  the root and the leaves labeled~$1,\ldots, j$ we have
\be{
(S_1(n),\ldots, S_k(n))\ed \frac{1}{2 n^{1/2}} (T_1(n),\ldots, T_k(n)).
}
Theorem~\ref{THM2} provides a rate of convergence of
this random vector to its limit. 

We can now clearly see the connection to the CRT
since according to \cite{Aldous1993}, 
uniform random binary plane trees converge to Brownian CRT, and the number of vertices in the spanning
tree of~$k$ randomly chosen leaves
in a uniform binary tree of~$n$ leaves  converges to the length of the tree induced by Brownian excursion sampled at
$k$ uniform times as per \cite[Theorem~7.9]{Pitman2006}, \cite{Pitman1999c}, and it is known that these
trees are formed by combining branches with lengths given by a Poisson point process on  the positive line
with intensity proportional to~$t\, dt$.

\paragraph{A statistical application.}
Is it possible with probability greater than~$1/2$ to tell the difference between two preferential attachment graphs started from non-isomorphic seeds
and run for a long time? This question is posed in the~$\ell=1$ case by \cite{Bubeck2014},
where the answer was determined to be yes as long as the degree sequences of the seeds are different.
The crucial step is to separate the two graphs based on the maximum degree which relies on a careful understanding of
the maximum degree along the lines of Corollary~\ref{cor1}. 
In fact by exploiting the connection to the Brownian CRT just mentioned (along
with other results), the answer to the question can be strengthened to yes as long as the two seed graphs are non-isomorphic, see
\cite{Curien2014}.

\paragraph{Organization of the paper.}
To show Theorems~\ref{THM2} and~\ref{THM:ORDERSTAT}, we relate the weight distributions
of both Models N$_\ell$ and L$_\ell$  to a single infinite color urn model that generalizes the single color models considered by \cite{Janson2006}, \cite{Pekoz2013}, \cite{Pekoz2013b}; urn models frequently appear when studying preferential attachment,
see for example \cite{Antunovic2013}, \cite{Berger2014}, \cite{Pekoz2013}, \cite{Pekoz2013a}, \cite{Ross2013},
and \cite{Pemantle2007}.
In the next section we define the relevant infinite color urn model
and make explicit the equivalence to the preferential attachment models under study.
In Section~\ref{sec:THM2proof} 
we state and prove a general approximation result from which Theorem~\ref{THM2} follows.
Section~\ref{sec:processconv} has the proof of Theorem~\ref{THM:ORDERSTAT}.

\subsection{An infinite color urn model}\label{sec1}

Consider the following infinite-color urn model. 
At step~$0$ there are ~$s\geq 1$ distinct colors present in the urn, and we assume that these colors are labelled from~$1$ to~$s$.
Fix an integer~$\ell\geq1$. At the~$n$th step, a ball is picked at random from the urn and returned along with an additional ball of the same color. Additionally, if~$n$ is a multiple of~$\ell$, a ball of (the new) color~$s+n/\ell$ is added after the~$n$th draw.

We are interested in the cumulative color counts; that is, for each~$k\geq 1$, let~$M_k(n)$ be the number of balls of colors~$1$ to~$k$ in the urn after the~$n$th draw  (and possible immigration) has been completed.
 Let~$m_k$ be the number of balls of colours~$1$ to~$k$ at time~$0$, so that~$M_k(0) = m_k$. 
 In order to avoid degeneracies, we will assume that, at time zero, at least one ball of each color from~$1$ to~$s$ is present; that is,~$m_1\geq1$ and~$m_{k+1}-m_{k}\geq 1$ for~$1\leq k < s$.

The following result makes explicit the connection between the urn model just described and 
the preferential attachment models under study. It follows from straightforward considerations.

\begin{lemma}\label{lemlink}
 Fix a seed graph~$G(0)$ with~$s$ vertices, and let~$d_1,\dots,d_s$ be the initial weight sequence. Let
$\ell\geq 1$ and consider either the situation of
\begin{enumerate}[$(i)$]
\item Model~N$_{\ell}$ for the graph sequence, and an infinite color urn model with initially~$d_k$~balls of color~$k$, where~$1\leq k\leq s$;
or
\item Model~L$_{\ell}$ for the graph sequence, and an infinite color urn model with initially~$d_k$~balls of color~$k$, where~$1\leq k\leq s$, and one ball of color~$s+1$.
\end{enumerate}
Then, for any~$r$ and~$n > r$,
\be{
	\bklr{d_1(n),d_1(n)+d_2(n),\dots,d_1(n)+\dots+d_r(n)} \eqlaw \bklr{M_1(\ell n),\dots,M_r(\ell n)}.
}
\end{lemma}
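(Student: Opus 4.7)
The plan is to exhibit an explicit coupling between the evolution of the graph $G(n)$ and the infinite-color urn, under which color $k$ is identified with vertex $k$ and each edge attachment is identified with one ball draw. Once the coupling is in place, $d_k(n)$ equals the number of color-$k$ balls in the urn after $\ell n$ draws (and associated immigrations), from which the claimed joint equality of cumulative counts follows by summing over $k=1,\dots,r$.

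To carry this out, I would first set up the time correspondence: one graph step adds $\ell$ edges plus one new vertex, which should correspond to $\ell$ urn draws plus one immigration at a multiple of $\ell$. For Model N$_\ell$, the new vertex $s+n$ appears only \emph{after} its $\ell$ outgoing edges are placed, so the natural coupling is with the urn of case~$(i)$: start with $d_k$ balls of color~$k$ for $1\leq k\leq s$; identify the $i$-th edge of graph step $n$ with urn draw $(n-1)\ell+i$; and let the immigration at urn time $n\ell$ produce the single ball of color $s+n$, matching the birth of vertex $s+n$ with weight $1$. For Model L$_\ell$, the new vertex $s+n$ is pre-added with weight $1$ before its edges are placed and may itself receive edges (possibly creating loops), which is exactly reflected in case~$(ii)$ by the extra initial ball of color $s+1$; subsequent immigrations at urn time $n\ell$ pre-add color $s+n+1$ just before graph step $n+1$ begins.

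Second, I would verify the one-step probability matching by induction. At any intermediate instant, by the induction hypothesis, the current weight of each vertex $k$ equals the current number of color-$k$ balls, so the total weight equals the total number of balls; the proportional-attachment probability $d_k(\cdot)/\sum_i d_i(\cdot)$ then coincides with the urn draw probability. Updating rules also match: in both the graph and the urn, a selected vertex/color has its count incremented by one immediately (the ``return with an extra ball of the same color'' rule exactly mirrors the sequential weight update inside a graph step, including in Model~L$_\ell$ where loops cause the newly pre-added vertex to potentially be selected). A short induction on $n$ then gives
\be{
d_1(n)+\cdots+d_r(n) \eqlaw M_r(\ell n)
}
jointly for $1\leq k\leq r$, provided $n>r$ guarantees that vertex/color $r$ has already appeared under both coupling conventions.

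The only real obstacle is bookkeeping the off-by-one shift that distinguishes the two models: in N$_\ell$ the new vertex is born \emph{after} its edges, aligning with the immigration happening \emph{after} the $\ell$-th draw, whereas in L$_\ell$ the new vertex is born \emph{before} its edges, which is why case~$(ii)$ must stock an extra initial ball of color $s+1$ and why the immigration of color $s+n+1$ (rather than $s+n$) coincides with the pre-addition of vertex $s+n+1$. Once this indexing is correctly aligned, the remainder of the argument is a routine verification.
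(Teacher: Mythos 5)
Your coupling argument is correct and is precisely the kind of ``straightforward consideration'' the paper alludes to without writing out; the key points---the time correspondence (urn draw $(n-1)\ell+i$ $\leftrightarrow$ $i$th edge of graph step $n$), the off-by-one shift between N$_\ell$ (vertex born after its edges, matching immigration after the $\ell$th draw) and L$_\ell$ (vertex pre-added, matching the extra initial ball of color $s+1$), and the inductive probability matching---are all handled correctly. One minor typo: the displayed equality near the end should read $d_1(n)+\cdots+d_k(n)\eqlaw M_k(\ell n)$ jointly for $1\leq k\leq r$ (you wrote $r$ in place of $k$ on the left), but the intent is clear.
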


\section{Proof of Theorem~\ref{THM2}}\label{sec:THM2proof}

Theorem~\ref{THM2} easily follows from the next result (proved immediately after its statement),
the fact that linear transformations of convex sets are convex, and Lemma~\ref{lemlink}.

\begin{proposition}\label{PROP3} Fix~$r > s$. Let~$B_1,\dots,B_{r-1}$ and~$Z_r$ be independent random variables such that
\be{
	B_k\sim  \begin{cases}
	\B(m_{k},m_{k+1}-m_k) & \text{if~$1\leq k < s$,} \\
	\B(m_s+(\ell+1)(k-s)+\ell,1) & \text{if~$s\leq k < r$,}
	\end{cases}
}
and~$Z_r \sim \GG(m_s+(\ell+1)(r-s)+\ell,\ell+1)$. Define
\be{
	Z_k = B_{k}\cdots B_{r-1} Z_r, \quad 1\leq k < r, \qquad Z=(Z_1,\dots,Z_r),
}
and let
\be{
	W(n)=(W_1(n),\ldots,W_r(n))=\frac{\ell^{\ell/(\ell+1)}}{(\ell+1)n^{\ell/(\ell+1)}}\bklr{M_1(n),\dots,M_r(n)}.
}
Then there is a positive constant~$C=C(r,\ell,m_s)$, independent of~$n$, such that
\ben{\label{1}
	\sup_{K}\babs{\IP\kle{W(n)\in K} - \IP\kle{Z\in K}} \leq \frac{C}{n^{\ell/(\ell+1)}}
}	
for all~$n$, where the supremum ranges over all convex sets~$K\subset \IR^{r}$.

\end{proposition}

We need some intermediate lemmas to prove Proposition~\ref{PROP3}. 
Denote by~$\bPclas{b}{w}{m}$ the distribution of white balls in a classical P\'olya urn after~$m$ completed draws, starting with~$b$ black and~$w$ white balls. Denote by~$\bPimm{\ell}{b}{w}{m}$ the number of white balls after~$m$ completed steps in the following P\'olya urn with immigration, starting with~$b$ black and~$w$ white balls: at the~$n$th step, a ball is picked at random from the urn and returned along with an additional ball of the same color;
additionally, if~$n$ is a multiple of~$\ell$, then a black ball is added \emph{after} the~$n$th draw and return.

\begin{lemma} \label{lem1} Let~$\ellp =  k-s+1$.
 If~$k \geq s$, we have
\ben{\label{3}
	M_k (n) \sim \bbPimm{\ell}{1}{m_s+\ell \ellp+(k-s)}{n-\ell \ellp}.
}
Furthermore, conditionally on~$M_{k+1}(n)$, we have
\ben{\label{4}
	M_k(n)\sim \begin{cases}
	\bbPclas{m_{k+1}-m_k}{m_{k}}{M_{k+1}(n)-m_{k+1}} & \text{if~$1\leq k < s$,}  \\[3ex]
	\bbPclas{1}{m_s+\ell \ellp+(k-s)}{M_{k+1}(n)-m_s-(\ell+1)\ellp} & \text{if~$k\geq s$.}
	\end{cases}
}
\end{lemma}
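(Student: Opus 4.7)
The plan is to prove both identities by a direct analysis of the urn's step-by-step dynamics, combined with the standard sub-urn embedding principle: if one restricts attention to balls of colors in $\{1,\dots,k+1\}$ and inspects the process only at those steps at which a ball of such a color is drawn, the resulting restricted process is a classical P\'olya urn whose running clock is the number of such \emph{effective} steps.

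For \eqref{3}, I would first observe that, for $n\leq \ell\ellp$, each immigrant carries a color in $\{s+1,\dots,s+\ellp\}$, and of these only the immigrant at step $\ell\ellp$ itself has color $k+1$ (the earlier ones have colors $\leq k$). A short bookkeeping check then gives, immediately after step $\ell\ellp$, exactly one ball of color $k+1$ and $m_s+\ell\ellp+(\ellp-1)=m_s+\ell\ellp+(k-s)$ balls of colors in $\{1,\dots,k\}$. Beyond time $\ell\ellp$, every subsequent immigrant has color $\geq k+2$; so, relabelling every color $\leq k$ as white and every color $>k$ as black, the evolution from time $\ell\ellp$ onwards is distributionally identical to a $\bPimm{\ell}{1}{m_s+\ell\ellp+(k-s)}{\cdot}$ urn run for $n-\ell\ellp$ further steps, which is \eqref{3}.

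For \eqref{4}, I would apply the embedding with white $=$ colors $\{1,\dots,k\}$, black $=$ color $k+1$, and invisible $=$ colors $>k+1$. A step is effective when the drawn ball is white or black; by the definition of the urn, the conditional law of the drawn ball at an effective step is exactly that of a classical P\'olya draw from the current white/black composition, independently of the invisible colors. Immigration contributes nothing to the white/black sub-urn when $1\leq k<s$ (trivially, since all immigrants have color $\geq s+1>k+1$) and, when $k\geq s$, only at the single step $\ell\ellp$, which was already absorbed into the initial state in \eqref{3}. Hence $M_{k+1}$ increases only at effective steps, and the number of effective steps in the relevant window is a deterministic function of $M_{k+1}(n)$: for $1\leq k<s$ the embedded clock starts at time $0$ with composition $m_k$ white and $m_{k+1}-m_k$ black, with $M_{k+1}(n)-m_{k+1}$ effective steps; for $k\geq s$ the clock starts at time $\ell\ellp$ with composition $m_s+\ell\ellp+(k-s)$ white and $1$ black (using $M_{k+1}(\ell\ellp)=m_s+(\ell+1)\ellp$), with $M_{k+1}(n)-m_s-(\ell+1)\ellp$ effective steps. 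Substituting into the classical P\'olya urn law yields both cases of \eqref{4}.

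The only conceptual point requiring a word of justification is the passage from conditioning on the full pattern of effective-step indicators to conditioning on the single statistic $M_{k+1}(n)$: this is legitimate because the classical P\'olya urn run for $m$ steps has a distribution depending only on $m$ and not on when those steps occur in real time, so averaging over the real-time positions of the effective steps preserves the conditional law of $M_k(n)$. The rest of the argument is routine accounting with the definitions.
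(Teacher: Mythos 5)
Your proof is correct and takes essentially the same approach as the paper: for \eqref{3}, track the deterministic phase up to time $\ell\ellp$ and then collapse colors $\leq k$ to white and $>k$ to black, obtaining the immigration urn; for \eqref{4}, collapse colors $\leq k$ to white, color $k+1$ to black, and ignore the rest, obtaining a classical P\'olya urn run for the (random) number of effective draws, which equals $M_{k+1}(n)$ minus its value at the start of the embedded clock. Your final paragraph makes explicit a conditioning step that the paper leaves implicit (namely that the classical P\'olya urn distribution depends only on the number of effective draws, not their real-time positions, so one may condition on $M_{k+1}(n)$ rather than on the full effective-step pattern); this added care is welcome but does not change the route.
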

\begin{proof}
To prove \eq{3} note that the number of balls having a color in the set~$\{1,\dots,k\}$ is deterministic up to the point where the first ball of color~$k+1$ appears in the urn; this is the case after~$\ell\ellp=\ell(k-s+1)$ completed draws. At that time we have~$M_{k}(\ell\ellp)+1=m_s+(\ell+1)\ellp$, so that~$M_{k}(\ell\ellp) = m_s+\ell\ellp+(k-s)$. After that, consider all  balls of colors~$\{1,\dots,k\}$ as `white' balls and  all balls of colors~$\{k+1,\dots,\}$ as `black' balls. The number of `white' balls for the remaining~$n-\ell\ellp$ steps now behaves exactly like~$\bPimm{\ell}{b}{w}{m}$ with~$b=1$,~$w=m_s+\ell\ellp+(k-s)$ and~$m=n-\ell \ellp$.

To prove the second line of \eq{4}, consider all balls of colors~$\{1,\dots,k\}$ as `white' balls and balls of color~$k+1$ as `black' balls. After time~$\ell\ellp$ the number of `white' balls now behaves exactly like~$\bPclas{b}{w}{m}$ with~$b=1$,~$w=M_{k}(\ell\ellp)$, and~$m$ being the number of times a ball among colors~$\{1,\dots,k+1\}$ was picked after time~$\ellp$, which is just~$M_{k+1}(n)-m_s-(\ell+1)\ellp$. 

The argument to prove the first line of \eq{4} is similar and therefore omitted.
\end{proof}

We will need the following coupling of P\'olya urns and beta variables
that is a generalization (from the~$b=1$ case) of \cite[Lemma~4.4]{Pekoz2013b}; for related distributional approximation
results, see \cite{Goldstein2013}.
\begin{lemma}
\label{lem2}
Fix positive integers~$b$,~$w$ and~$n$. There is a coupling~$(X, Y)$ with~$X\sim \bPclas{b}{w}{n}$ and~$Y\sim \B(w,b)$, such that almost surely,
\ben{\label{6}
	\abs{X-nY}< \frac{b(4w+b+1)}{2}.
}
\end{lemma}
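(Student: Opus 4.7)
The plan is to use the comonotone (quantile) coupling: on a common probability space draw $U\sim\U(0,1)$ and set $Y=F_Y^{-1}(U)$ and $X=F_X^{-1}(U)$, where $F_Y$ is the CDF of $\B(w,b)$ and $F_X$ is the CDF of $\Pclas{b}{w}{n}$. With this coupling $X$ is a nondecreasing step function of $Y$, so \eqref{6} reduces to the pointwise bound $|F_X^{-1}(u)-nF_Y^{-1}(u)|<b(4w+b+1)/2$ uniformly in $u\in(0,1)$. A variance computation also makes this coupling the natural choice: the minimum of $\Var(X-nY)$ over all couplings is achieved in the comonotone case and is $O(wb/(w+b+1))$, independent of $n$, giving at least some hope of an $n$-free a.s.\ bound.

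The main bridge between the two CDFs is the identity
\be{
F_X(w+k) \;=\; \IE\bkle{F_Y(\wt Y_k)},\qquad\wt Y_k\sim\B(k+1,n-k),
}
valid for $0\leq k<n$. This is obtained by combining the Beta--Binomial mixture representation $X-w\mid\theta\sim\Bi(n,\theta)$ with $\theta\sim\B(w,b)$, the standard identity $\IP(\Bi(n,t)\leq k)=I_{1-t}(n-k,k+1)$, and a Fubini swap of the order of integration. Setting $y_k:=F_Y^{-1}(F_X(w+k))$---the threshold of $Y$ at which $X$ jumps from $w+k$ to $w+k+1$---it suffices to bound $|(w+k)-ny_k|$ and $|(w+k+1)-ny_k|$ by $b(4w+b+1)/2$ for each admissible $k$.

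Since $\wt Y_k$ is concentrated around $\mu_k=(k+1)/(n+1)$ with $\Var(\wt Y_k)=\mu_k(1-\mu_k)/(n+2)$, a second-order Taylor expansion of $F_Y$ about $\mu_k$ yields
\be{
|F_X(w+k)-F_Y(\mu_k)| \;\leq\; \tsfrac12|F_Y''(\mu_k)|\Var(\wt Y_k)+\text{(higher order)};
}
using $F_Y''(\mu)=f_Y(\mu)[(w-1)(1-\mu)-(b-1)\mu]/[\mu(1-\mu)]$ and inverting $F_Y$ linearly at $\mu_k$ bounds $|ny_k-(k+1)|$ by a constant multiple of $w+b$ whenever $\mu_k$ is in the interior of $(0,1)$. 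The main obstacle is the boundary regime, where $f_Y(\mu_k)$ degenerates and the linear inversion fails: for these $k$ (roughly, the $O(b)$ values near $0$ or $O(w)$ near $n$) one instead compares $F_X(w+k)$ and $F_Y(y)$ directly via their explicit product forms, using $F_Y(y)\sim y^w/[wB(w,b)]$ for small $y$ (and the symmetric estimate near $y=1$) together with the ratio-of-gammas expression for $F_X(w+k)$. Carefully tracking the constants---in particular, the $b(b+1)/2$ contribution that arises from accumulating the boundary errors over the $b$ small-$k$ values---yields the stated uniform bound.
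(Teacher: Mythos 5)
Your approach is genuinely different from the paper's: the paper constructs an explicit coupling by hand, proving the base case $b=1$ via the representation $Y=\max(V_0,\dots,V_{w-1})$ and $N(m)=\max_k\bklr{k+\ceil{(m+w-k)V_k}}$, and then inducting on $b$ by composing a $b=1$ coupling with a $(b-1)$-coupling via $N_b(m)=N'(N_{b-1}(m)-(w+b-1))$, which yields the constant $b(4w+b+1)/2$ immediately from a telescoping sum. You instead use the comonotone coupling and try to verify the pointwise quantile bound $|F_X^{-1}(u)-nF_Y^{-1}(u)|$ analytically. The reduction to bounding $|(w+k)-ny_k|$ and $|(w+k+1)-ny_k|$ is correct, and your key identity $F_X(w+k)=\IE[F_Y(\wt Y_k)]$ with $\wt Y_k\sim\B(k+1,n-k)$ is correct (P\'olya--Eggenberger is a Beta--Binomial mixture, plus the incomplete-beta identity and Fubini). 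So the skeleton is sound and, if completed, would give a more conceptual but less explicit proof.

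However, the proposal has real gaps where the paper's proof does actual work, and they are exactly the places you defer. (1) The Taylor argument is stated only to second order with ``higher order'' left uncontrolled; near the endpoints $f_Y$ and its derivatives degenerate at different rates (like $\mu^{w-1}$ near $0$ and $(1-\mu)^{b-1}$ near $1$), and the higher central moments of $\wt Y_k$ do not obviously decay fast enough to make the expansion uniformly valid, so a rigorous remainder bound is needed, not just asserted. (2) The boundary regime is the heart of the matter, and it is treated in one sentence; the claim that a ``$b(b+1)/2$ contribution arises from accumulating the boundary errors over the $b$ small-$k$ values'' is not coherent here --- each $k$ must be handled separately (the bound is a supremum over $u$, not a sum over $k$), so nothing ``accumulates.'' Also, the degeneracy near $0$ is governed by $w$ (density $\sim y^{w-1}$), not $b$, so even the description of which $k$ are problematic is off. (3) No derivation of the specific constant $b(4w+b+1)/2$ is given; ``carefully tracking the constants'' is precisely the part that the paper's inductive construction handles cleanly and that your approach would have to earn. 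As it stands this is a plausible program, not a proof; to compare fairly, you would need to (a) write an exact remainder for the expansion of $\IE[F_Y(\wt Y_k)]$ (e.g.\ via the explicit gamma-ratio form of $F_X(w+k)$ you allude to), (b) handle the ranges $k\lesssim w$ and $n-k\lesssim b$ by direct comparison of the product formulas, and (c) carry a uniform constant through.
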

\begin{proof} We use induction over~$b$, and start with the base case~$b=1$. Let~$V_0,\dots,V_{w-1}$ be independent and uniformly distributed on the interval~$[0,1]$. By a well known representation of the distribution~$\B(w,1)$, we can choose
\be{
	Y:=\max(V_{0},\ldots , V_{w-1}).
}
To construct~$X$, first note that for~$\bPclas{1}{w}{m}\{A\}$ denoting
the probability the relevant P\'olya urn law puts on the set~$A\subset \IZ$, we have
\be{
	\bPclas{1}{w}{m}\{w,\dots,t\}=\prod_{k=0}^{w-1} \frac{t-k}{m+w-k}
}
for all~$m\geq 0$ and for~$w\leq t \leq w+m$ (see e.g.\ \cite[Eq.~(2.4), p.~121]{Feller1968}). For each~$m\geq 0$, let
\be{
	N(m) :=\max_{0\leq k\leq w-1} \bklr{k+\ceil{ (m+w-k)V_k}}.
}
It is easy to see that the cumulative distribution function of~$N(m)$ is that of~$\bPclas{1}{w}{m}$ for each~$m$, and that
\ben{\label{7}
	\abs{N(m)-mY} \leq w+1 \qquad\text{for all~$m\geq 0$.}
}
Letting~$X:=N(n)$, \eq{6} follows for the case~$b=1$. As a side remark, note that, although~$N(m)\sim \bPclas{1}{w}{m}$ for each~$m$, the joint distribution of~$(N(0),N(1),\dots)$ is not that of a P\'olya urn process!

To prove the inductive step, assume we have constructed~$N_{b-1}(0),N_{b-1}(1),\dots$ and~$Y_{b-1}$ such that~$N_{b-1}(m)\sim\bPclas{b-1}{w}{m}$ for all~$m\geq 0$, such that~$Y_{b-1}\sim \B(w,b-1)$, and such that
\ben{\label{8}
	\abs{N_{b-1}(m)-mY_{b-1}} \leq (b-1)(4w+b)/2
}
for all~$m\geq 0$. Now, let~$Y'_b\sim \B(w,1)$ be independent of all else and let~$N'(0),N'(1),\dots$ be defined and coupled to~$Y'$ as in the base case, that is~$N'(m)\sim\bPclas{1}{w}{m}$ and~$\abs{N'(m)-m Y'}\leq w+1$. Define
\be{
	N_{b}(m):= N'\bklr{N_{b-1}(m)-(w+b-1)}.
}	
It is not difficult to see that~$N_{b}(m)\sim\bPclas{b}{w}{m}$. Also, it is not difficult to see that~$Y_b := Y_{b-1}Y'\sim\B(w,b)$. Noting from \eq{7} that  for any~$y>0$
\be{
	\abs{N'(m)-y Y'}\leq \abs{N'(m)-m Y'} + \abs{m-y}Y'\leq  (w+1) + \abs{m-y},
}
we have
\bes{
	\abs{N_b(m) - mY_b} & = \babs{N'\klr{N_{b-1}(m)-(w+b-1)} - mY_{b-1}Y'}  \\
	& \leq (w+1) + \babs{N_{b-1}(m)- (w+b-1)- mY_{b-1})} \\
	& \leq (w+1) + (w+b-1) + (b-1)(4w+b)/2 \\
	& = \frac{b(4w+b+1)}{2}.
}
This concludes the inductive step, where \eq{6} is just the case~$m=n$.
\end{proof}

The last ingredient we need to prove Proposition~\ref{PROP3}
is some moment information.

\begin{lemma} \label{lem3}
For any~$k > s$, and~$q=1,2,\ldots,$ we have
\ben{
	\IE M_k(n)^q \asymp n^{q\ell/(\ell+1)}, \label{9}
}
and for ~$w=m_s+(\ell+1)(k-s)+\ell$ and~$t=n-\ell(k-s+1)$,
\besn{\label{10}
		&\IE\klg{ M_k(n)(M_k(n)+1)\cdots (M_k(n)+\ell)}   \\
		&\qquad\quad=w\frac{(w+1+\floor{\frac{t-1}{\ell}}+t)\cdots(w+1+\floor{\frac{t-1}{\ell}}+t+\ell)}
{w+1+(\ell+1)\floor{\frac{t-1}{\ell}}+\ell} \\
		&\qquad \quad= \bklr{m_s+(\ell+1)(k-s)+\ell}n^\ell \left(\frac{\ell+1}{\ell}\right)^{\ell}(1+\bigo(n^{-1})).
}
Furthermore,
\ben{\label{11}
	\limsup_{n\toinf}\IE\bbbklg{\frac{n^{\ell/(\ell+1)}}{ M_k(n)}} <\infty.
}
\end{lemma}
\begin{proof} The asymptotic \eq{9} follows from \cite[Lemma~4.1]{Pekoz2013b}. From that lemma, we also have
for~$Y\sim \bPimm{\ell}{1}{w}{t}$,
\be{
	\IE \klg{Y(Y+1)\cdots (Y+\ell)}
	=\prod_{j=0}^\ell (w+j) \prod_{i=0}^{t-1}\left(1+\frac{\ell+1}{w+1+i+\lfloor i/\ell\rfloor}\right).
}
Setting~$T = \floor{\frac{t-1}{\ell}}$, using careful bookkeeping and a telescoping argument, we obtain
\ba{
&\IE \klg{Y(Y+1)\cdots (Y+\ell)}\\
&\qquad=\prod_{j=0}^\ell (w+j) \prod_{i=0}^{t-1}\left(1+\frac{\ell+1}{w+1+i+\lfloor i/\ell\rfloor}\right)\\
&\qquad=\prod_{j=0}^\ell (w+j) \prod_{k=0}^{T -1}\prod_{i=0}^{\ell-1} \left(1+\frac{\ell+1}{w+1+i+k(\ell+1)}\right)
	 \prod_{m=\ell T}^{t-1}\left(1+\frac{\ell+1}{w+1+m+T}\right)  \\
&\qquad=w \prod_{i=0}^{\ell-1}(w+1+i +(\ell+1)T) 
	 \prod_{m=\ell T}^{t-1}\left(1+\frac{\ell+1}{w+1+m+T}\right)  \\
&\qquad=w \prod_{i=0}^{\ell-1}(w+1+i +(\ell+1)T) 
	 \prod_{m=0}^{t-1-\ell T}\left(1+\frac{\ell+1}{w+1+m+(\ell+1)T}\right)  \\
&\qquad=w \prod_{i=t-\ell T}^{\ell-1}(w+1+i +(\ell+1)T) 
	 \prod_{m=0}^{t-1-\ell T}\left(w+1+m+(\ell+1)(T+1)\right)  \\
&\qquad=w \smashoperator{\prod_{i=0}^{\ell+\ell T-t-1}}\enskip(w+1+i +T+t  ) 
	 \quad\smashoperator{\prod_{m=\ell+\ell T-t+1}^{\ell}}\enskip\left(w+1+m+T+t\right)  \\
&\qquad=w\frac{(w+1+T+t)\cdots(w+1+T+t+\ell)}
{w+1+(\ell+1)T+\ell}.
}
Setting~$w=m_s+(\ell+1)(k-s)+\ell$ and~$t=n-\ell(k-s+1)$ as per~\eq{3} yields~\eq{10}.

In order to prove \eq{11}, let~$X\sim \bPimm{\ell}{1}{w+1}{t}$ and~$Y\sim \bPimm{\ell}{1}{w}{t}$,
where~$w=m_s+(\ell+1)(k-s)+\ell-1$. From~\eq{3} and \cite[Lemma~4.2]{Pekoz2013b} we have that
\be{
	\IE f(X) = \frac{\IE\klg{Yf(Y+1)}}{\IE Y}
}
for any bounded function~$f$; in particular for the function~$f(x)=1/x$,  bounded when~$x\geq 1$, we have
\be{
	\IE X^{-1} = \IE\bbbklg{\frac{Y}{(Y+1)\IE Y}} \leq \frac{1}{\IE Y}.
}
By \eq{9},~$\IE Y\asymp n^{\ell/(\ell+1)}$, from which \eq{11} easily follows.
\end{proof}

Note that the Landau-$\bigo$ notation of the lemma contains a constant that depends on~$\ell,k,$ and both of~~$m_s,s$. But here and below we ignore the dependence of constants on~$s$ when also depending on~$m_s$ through the inequalities~$1\leq s\leq m_s$. 

\begin{proof}[Proof of Proposition~\ref{PROP3}] To ease notation, we fix~$n$ and drop it in our variable notation and write, for example,
$W_k$ instead of~$W_k(n)$. For~$k < l$, let
\be{
	K_{k,l} = B_k\cdots B_{l}, \qquad
	V_{k,l} = K_{k,l-1}W_l, \qquad V_{l,l} = W_l.
}
We may assume that~$B_1,\dots,B_{r-1}$  are all independent of~$W=W(n)$. Fix a convex subset~$K\subset \IR^r$, assuming without loss of generality that~$K$ is closed, and let~$h$ be the indicator function of~$K$. We proceed in two major steps by writing
\bes{
	&\IP\kle{W\in K} - \IP\kle{Z\in K}  \\
	& \quad = \IE h(W) - \IE h(Z) \\
	&\quad = \IE \bklg{h(W) - h\klr{V_{1,r},\dots,V_{r,r}}} + \IE \bklg{h\klr{V_{1,r},\dots,V_{r,r}} - h(Z)}  \\
	&\quad =: \IE R_1 + \IE R_2.
}
In order to bound~$R_1$, first write it as the telescoping sum
\bes{
	R_1 & = h(V_{1,1},\dots,V_{r,r})-h\bklr{V_{1,r},V_{2,r},\dots,V_{r,r}}\\
	&=
	\sum_{k=1}^{r-1} \bkle{
	 h\bklr{V_{1,k},\dots,V_{k-1,k},V_{k,k},V_{k+1,k+1},\dots,V_{r,r}} \\
	&\qquad\qquad -
	h\bklr{V_{1,k+1},\dots,V_{k-1,k+1},V_{k,k+1},V_{k+1,k+1},\dots,V_{r,r}}
	}  \\
	& =: \sum_{k=1}^{r-1} R_{1,k}.
}
Fix~$k$, and define~$b_k$ and~$w_k$ to be the parameters of the P\'olya urn in Lemma~\ref{lem1}, that is,
\be{
	b_k = \begin{cases}
	m_{k+1}-m_k & \text{if~$1\leq k < s$,} \\
	1 & \text{if~$k\geq s$,}
	\end{cases}
	\quad
	w_k = \begin{cases}
	m_k & \text{if~$1\leq k < s$,} \\
	m_s + \ell (k-s+1) +(k-s) & \text{if~$k\geq s$.}
	\end{cases}
}
Conditioning on~$M_{k+1}$, we can use Lemmas~\ref{lem1} and~\ref{lem2}, to conclude that there is a coupling~$(X_k,Y_k)$ with
\be{
	X_k  \sim \bbPclas{b_k}{w_k}{M_{k+1}-c_k},
	\qquad
	Y_k  \sim \B(w_k,b_k),
}
such that almost surely,
\be{
	\abs{X_k - (M_{k+1}-c_k)Y_k} \leq \frac{b_k(4w_k+b_k+1)}{2},	
}
where
\be{
	c_k =\begin{cases}
	m_{k+1} & \text{if~$1\leq k < s$,}\\
	m_s+(\ell+1)(k-s+1) & \text{if~$k\geq s$.}
	\end{cases}
}
Hence, there is a constant~$C(k,\ell,m_s)$ and a random variable~$E_k$ with~$\abs{E_k}\leq C(k,\ell,m_s)$ almost surely, such that
\be{
	\frac{X_k}{M_{k+1}} = Y_k + \frac{E_k}{M_{k+1}}.
}
Define
\ba{
	V'_{j,k} & :=\frac{\ell^{\ell/(\ell+1)}}{(\ell+1)n^{\ell/(\ell+1)}} K_{j,k-1} X_k = K_{j,k-1}\frac{X_k}{M_{k+1}} W_{k+1}  \quad\text{for~$1\leq j < k$,}\\
	V''_{j,k+1} & :=\frac{\ell^{\ell/(\ell+1)}}{(\ell+1)n^{\ell/(\ell+1)}}K_{j,k-1} Y_k M_{k+1} = K_{j,k-1} Y_k W_{k+1} \quad\text{for~$1\leq j < k + 1$,}
}
and
\be{
	V'_{j,j}  := V_{j,j}\quad\text{for~$j \geq  k$,}\qquad
  V''_{j,j}  := V_{j,j}\quad\text{for~$ j \geq  k + 1$.}
}
Note that by Lemma~\ref{lem1},~$\law(V'_{j,k}) = \law(V_{j,k})$ and~$\law(V''_{j,k+1}) = \law(V_{j,k+1})$, hence we have
\bes{
	R_{1,k} & \eqlaw h\bklr{V'_{1,k},\dots,V'_{k-1,k},V'_{k,k},V'_{k+1,k+1},\dots,V'_{r,r}} \\
	&\quad\qquad -
	 h\bklr{V''_{1,k+1},\dots,V''_{k-1,k+1},V''_{k,k+1},V''_{k+1,k+1},\dots,V''_{r,r}}\\
	 & = g(Y_k + E_k/M_{k+1}) - g(Y_k),
}
where we define (in notation anticipating future conditioning)
\be{
	g(x) := h(K_{1,k-1}xW_{k+1},\dots,K_{k-1,k-1}xW_{k+1},W_{k+1},\dots,W_{r}).
}
Let~$\cF_k$ be the~$\sigma$-algebra generated by~$B_1,\dots,B_{k-1}$,~$E_k$, and~$W$ (hence also~$(M_j)_{j=1}^r$).
Since~$h$ is the indicator function of a convex set, and since linear transformations of convex sets result again in convex sets, conditional on~$\cF_k$, we have that~$g$ is of the form~$g(x) = \I[a\leq x\leq b]$ for~$-\infty\leq a \leq b\leq \infty$ which are random but~$\cF_k$-measurable. Hence,
\bes{
	\IE \klg{\abs{R_{1,k}}|\cF_k} & \leq \IP\bbbkle{a-\frac{\abs{E_k}}{M_{k+1}}\leq Y_k \leq a+ \frac{\abs{E_k}}{M_{k+1}}\bbbmid\cF_k}\\
	&\quad\quad+ \IP\bbbkle{b-\frac{\abs{E_k}}{M_{k+1}}\leq Y_k \leq b+ \frac{\abs{E_k}}{M_{k+1}}\bbbmid\cF_k} \\
	& \leq 2	\sup_{x\in \IR}\IP\bbbkle{x-\frac{C}{M_{k+1}}\leq Y_k \leq x+ \frac{C}{M_{k+1}}\bbbmid\cF_k}
	 \leq \frac{C'}{M_{k+1}}
}
for some constant~$C'=C'(k,\ell,m_s)$, and where the last inequality follows from basic properties of the beta distribution. Hence, by~\eq{11} of Lemma~\ref{lem3},
\ben{\label{12}
	\IE \abs{R_{1,k}} \leq \IE\bbbklg{\frac{C'}{M_{k+1}}} \leq \frac{C''}{n^{\ell/(\ell+1)}}
}
for some constant~$C''=C''(k,\ell,m_s)$.

In order to bound~$R_2$, write
\bes{
	R_2 & = h\bklr{(K_{1,r-1},\dots,K_{r-1,r-1},1)W_r} - h\bklr{(K_{1,r-1},\dots,K_{r-1,r-1},1)Z_r} \\
	& = g(W_r) - g(Z_r),
}
where now
\be{
	g(x) = h\bklr{(K_{1,r-1},\dots,K_{r-1,r-1},1)x}.
}
Given~$K_{1,r-1},\dots,K_{r-1,r-1}$, the function~$g$ is again of the same form as in the first part of the proof, so that
\ben{\label{13}
	\IE\abs{R_2}\leq 2\dk\bklr{\law(W_r),\law(Z_r)},
}
where~$\dk$ denotes the Kolmogorov distance, i.e., the supremum distance  between  distribution functions.
Define the scaling constants
\be{
\tilde{\mu}_n = \frac{(\ell+1)n^{\ell/(\ell+1)}}{\ell^{\ell/(\ell+1)}}, \hspace{5mm}  \mbox{and} \hspace{5mm}
\mu_n = \left(\frac{(\ell+1)\IE M_r(n)^{\ell+1}}{m_s+(\ell+1)(r-s)+\ell} \right)^{1/(\ell+1)}.
}
Since the Kolmogorov distance is scale invariant,
\besn{\label{14}
	& \dk\bklr{\law(W_r),\law(Z_r)} \\
	& \quad = \dk\bklr{\law(M_r(n)/\mu_n),\law(Z_r\, \tilde{\mu}_n/\mu_n)}\\
	& \quad \leq \dk\bklr{\law(M_r(n)/\mu_n),\law(Z_r)} +
	\dk\bklr{\law(Z_r),\law(Z_r\, \tilde{\mu}_n/\mu_n)} := R_{2,1}+R_{2,2}.
}
From \cite[Theorem~1.2]{Pekoz2013b} we have that~$R_{2,1}=\bigo(n^{-\ell/(\ell+1)})$ and, noticing that the density of~$Z_r$ is bounded, standard arguments give~$R_{2,2}=\bigo(\abs{1-\tilde{\mu}_n/\mu_n})$. To bound~$\abs{1-\tilde{\mu}_n/\mu_n}$, use~\eq{9} and~\eq{10} to find
\ba{
\IE M_r(n)^{\ell+1}&=\IE\{M_r(n)\cdots (M_r(n)+\ell)\}+\bigo(n^{\ell^2/(\ell+1)})\\
	&=(m_s+(\ell+1)(r-s)+\ell)n^\ell\left(\frac{\ell+1}{\ell}\right)^{\ell}\bklr{1+\bigo(n^{-1})}+\bigo\bklr{n^{\ell^2/(\ell+1)}}\\
	&=(m_s+(\ell+1)(r-s)+\ell)n^\ell \left(\frac{\ell+1}{\ell}\right)^{\ell}+\bigo\bklr{n^{\ell^2/(\ell+1)}}.
}
Using this last expression, we easily find
\ba{
\frac{{\mu}_n}{\tilde{\mu}_n}=\bbklr{1+\bigo\bklr{n^{-\ell/(\ell+1)}}}^{1/(\ell+1)}.
}
Now, using that for~$0<x<1$,~$(1+x)^{1/(\ell+1)}-1\leq x/(\ell+1)$, we find~${\mu}_n/\tilde{\mu}_n = 1 + \bigo\bklr{n^{-\ell/(\ell+1)}}$ and so~$R_{2,2}=\bigo\bklr{n^{-\ell/(\ell+1)}}$.
Now collecting the bounds \eq{12}, \eq{13} and \eq{14}, proves~\eq{1}.
\end{proof}

\section{Proof of Theorem~\ref{THM:ORDERSTAT}}\label{sec:processconv}

Since Theorem~\ref{THM2} establishes convergence of finite
dimensional distributions, to prove the claimed convergence of
$(\tilde D(n))_{n\geq 1}$,
we need to show tightness in~$l_p$ for the relevant~$p$.
In the next section we state and prove an~$l_p$ tightness criteria in terms of moment 
conditions
 and then in Section~\ref{sec:tightmombd} we 
bound the relevant moments appropriately.
In Section~\ref{sec:ordcont} we establish the
continuity of the ordering 
function on~$l_p$ and put everything together to prove Theorem~\ref{THM:ORDERSTAT}
in Section~\ref{sec:ORDERSTATproof}.

\subsection{Tightness in~$l_p$}

The following result is essentially \cite[Section~6, Theorem~16]{Suquet1999}, but we give a self-contained proof. 

\begin{lemma}\label{lem:tightcrit}
 Let~$X(n) = (X_1(n),X_2(n),\dots)$,~$n\geq 1$, be a sequence of~$l_p$-valued random elements, where~$1\leq p<\infty$. If 
\be{
	(i) \enskip \text{$\sup_{n\geq 1} \sum_{i\geq 1}\IE\abs{X_i(n)}^p < \infty$}\qquad\text{and}\qquad 
	(ii) \enskip \text{$\lim_{k\toinf} \sup_{n\geq 1} \sum_{i\geq k}\IE\abs{X_i(n)}^p= 0$,} 
}
then the sequence of probability measures~$\bklr{\IP[X(n)\in \cdot\,]}_{n\geq 1}$ is tight in the~$l_p$-topology.
\end{lemma}
\begin{proof}\def\IM{\mathbbm{M}}
By the Frechet-Kolmogorov theorem,~$B\subset l_p$ is relatively compact if it is bounded and
\be{
	 \lim_{k\toinf}\sup_{x\in B}\sum_{i\geq k} \abs{x_i}^p =0.
}
Thus, for any~$C>0$ and any strictly increasing sequence of numbers~$1\leq k_1< k_2<\cdots$, the set
\be{
	\mathcal{K}\klr{C,(k_m)_{m\geq 1}} := \bbbklg{x\in l_p\,:\, 
				\text{$\displaystyle\sum_{i\geq 1}\abs{x_i}^p\leq C$, and~$\displaystyle\sum_{i\geq k_m}\abs{x_i}^p\leq \frac{1}{m}$ for all~$m\geq 1$}}
}
is~$l_p$-compact. 

Now, fix~$\eps>0$. From~$(i)$ we conclude that there is~$C$ such that
\be{
	\IP\bbkle{\,\sum_{i\geq 1}\abs{X_i(n)}^p > C\,} \leq \frac{1}{C}\sum_{i\geq 1}\IE\abs{X_i(n)}^p\leq \frac{\eps}{2}\qquad\text{for all~$n\geq 1$.}
}
From~$(ii)$ we conclude that there is a sequence~$1\leq k_1<k_2<\cdots$ such that, for all~$m\geq 1$, 
\be{
	\IP\bbkle{\,\sum_{i\geq k_m}\abs{X_i(n)}^p > \frac{1}{m}\,} \leq m\sum_{i\geq k_m}\IE\abs{X_i(n)}^p\leq \frac{\eps}{2^{m+1}}\qquad\text{for all~$n\geq 1$.}
}
For this~$C$ and~$(k_m)_{m\geq 1}$, we conclude that, for all~$n\geq 1$,
\bes{
	\IP\bkle{X(n)\not\in \mathcal{K}\klr{C,(k_m)_{m\geq 1}}} & \leq \IP\bbkle{\,\sum_{i\geq 1}\abs{X_i(n)}^p > C\,} + \sum_{m\geq 1}\IP\bbkle{\,\sum_{i\geq k_m}\abs{X_i(n)}^p > \frac{1}{m}\,}\\
	&\leq \frac{\eps}{2}+\frac{\eps}{2}\sum_{m\geq 1}\frac{1}{2^m} = \eps.\qedhere
}
\end{proof}

\subsection{Moment bounds}\label{sec:tightmombd}
The following moment bounds are used to apply Lemma~\ref{lem:tightcrit}.
\begin{lemma}\label{lem:tightmombd}
Let~$\ell\geq1$, and let~$D_k(n)$ be as in Theorem~\ref{THM2}. There is a constant~$C=C(\ell, m_s,s)$ such that
\be{
\IE D_k(n)^{\ell+1}\leq C k^{-\ell}.
}
Moreover, in the special cases~$N_1$ and~$L_1$, 
\be{
\IE D_k(n)^4\leq C k^{-2}.
}
\end{lemma}
\begin{proof}
By Lemma~\ref{lemlink}, it's enough to prove the theorem for 
the appropriate equivalent urn model (possibly adjusting~$s$ and~$m_s$, each by at most one).
Recall the definition of~$M_k(n)$ as in Proposition~\ref{PROP3},
set~$U_1(n)=M_1(n)$, and for~$k=2,3,\ldots,$ let~$U_k(n)=M_k(n)-M_{k-1}(n)$,
where we set~$U_k(n)=0$ if~$k$ is a color that has not been added by time~$n$.
Note that~$\law(U_k(n))=\law(D_k(n))$.

First note that the statement of the theorem is trivial if~$U_k(n)=0$, that is, if color~$k$ has not yet appeared; in particular if~$k-s> n+1$.
Now, for~$k>s$, 
\be{
\law\bklr{U_k(n)|M_k(n)}=\bPclas{m_s+(k-s)(\ell+1)-1}{1}{M_k(n)-m_s-(k-s)(\ell+1)}.
}
Let the random variable~$B\sim\B[1, m_s+(k-s)(\ell+1)-1]$ be independent of~$M_k(n)$. Conditional on~$B$ and~$M_k(n)$, let 
$X(M_k(n), B)$ be binomial with parameters~$M_k(n)-m_s-(k-s)(\ell+1)$ and~$B$.
By the de Finetti representation of
the classical P\'olya urn, we have
\ben{\label{eq:polyarep}
\law\bklr{U_k(n)|M_k(n)}=\law\bklr{(1+ X(M_k(n), B))|M_k(n)}.
}
H\"older's inequality implies that for non-negative~$x,y$ 
\be{
(x+y)^p\leq 2^{p-1} (x^p + y^p),
}
and so starting from~\eq{eq:polyarep}, we have
\ben{\label{eq:ukmombd1}
\IE U_k(n)^p\leq 2^{p-1}\bklr{1+\IE  X(M_k(n),B)^p}.
}
Now note that, if~$\law(Y)=\Bi(N,q)$, 
then
for positive integer~$p$,
and denoting Stirling numbers of the
second kind by~${p \brace j}$ (and note these are non-negative),
\be{
\IE Y^p=\sum_{j=0}^p {p \brace j} \IE \klg{Y(Y-1)\cdots(Y-j+1)} \leq \sum_{j=0}^{p} {p\brace j} (Nq)^j.
}
So from~\eq{eq:ukmombd1}, condition on~$M_k(n)$ and~$B$ to find
\ben{\label{eq:ukmombd2}
\IE U_k(n)^p\leq 2^{p-1}\left(1+  \sum_{j=0}^{p} {p\brace j} \IE M_k(n)^j \IE B^j \right).
}
Standard formulas for beta moments imply
\ben{\label{eq:betamoms}
\IE B^j=\frac{\Gamma(j+1)\Gamma(m_s+(k-s)(\ell+1))}{\Gamma(m_s+(k-s)(\ell+1)+j)}\leq C k^{-j},
}
where~$C=C(\ell,m_s,s)$ is a constant.
By Jensen's (or H\"older's) inequality, for~$j\leq p$,
\ben{\label{eq:jensenmom}
\IE M_k(n)^j\leq \left(\IE M_k(n)^{p}\right)^{j/p}.
}
Set~$p=\ell+1$, and we use that~$M_k(n)^{\ell+1}\leq M_k(n)(M_k(n)+1)\cdots (M_k(n)+\ell)$.
From~\eq{10} of Lemma~\ref{lem3},
\bes{
&\IE\klg{M_k(n)(M_k(n)+1)\cdots (M_k(n)+\ell)}= (m_s+(\ell+1)(k-s)+\ell)\\
&\times \frac{(m_s+k-s+1+ \floor{\frac{n-\ell(k-s+1)-1}{\ell}}+n)\cdots(m_s+k-s+1+ \floor{\frac{n-\ell(k-s+1)-1}{\ell}}+n+\ell)}
{m_s+(\ell+1)(k-s+1)+(\ell+1) \floor{\frac{n-\ell(k-s+1)-1}{\ell}}+\ell}\\
&\leq (m_s+(\ell+1)k+\ell)\frac{(m_s+k-s+1+ \frac{n-\ell(k-s+1)-1}{\ell}+n+\ell)^{\ell+1}}{m_s+(\ell+1)(k-s+1)+(\ell+1) \frac{n-\ell(k-s+1)-1}{\ell}-1} \\
&\leq(m_s+(\ell+1)k+\ell) \frac{(m_s+ n\left(\frac{\ell+1}{\ell}\right)+\ell)^{\ell+1}}{m_s+n\left(\frac{\ell+1}{\ell}\right)-\frac{2\ell+1}{\ell}}\\
&\leq C' k n^\ell,
}
where~$C'=C'(\ell,m_s)$ is a constant.
Putting these moment estimates together with~\eq{eq:betamoms} and~\eq{eq:ukmombd2}, we find that
\ben{
\IE U_k(n)^{\ell+1}\leq C''\sum_{j=0}^{\ell+1} (k n^\ell)^{j/(\ell+1)} k^{-j}=C''\sum_{j=0}^{\ell+1} \left(\frac{n}{k}\right)^{ j\ell/(\ell+1)}
\leq C'''(n/k)^\ell, \label{eq:ukmombd3}
}
where~$C''$ and~$C'''$ are constants depending only on~$\ell, m_s, s$ and the last inequality is because we may assume~$n\geq k-s-1$.
Scaling~$U_k(n)$ by~$n^{-\ell/(\ell+1)}$, we have 
\be{
\IE \bbklg{\bklr{n^{-\ell/(\ell+1)} U_k(n)}^{\ell+1}}\leq C''' k^{-\ell},
}
Which proves the first assertion of the lemma.
For the second assertion, follow the proof up to~\eq{eq:jensenmom} and now set~$\ell=1$ and~$p=4$.
Again~$\IE M_k(n)^4$ can be bounded by the fourth rising factorial moment and now
\cite[Lemma~4.1]{Pekoz2013b} implies that for~$w=m_s+2(k-s)+1$,~$t=n-(k-s+1)$,
\bes{
&\IE\klg{ M_n(k) \cdots (M_n(k)+3)} \\
&\qquad= \prod_{j=0}^3 (w+j) \prod_{i=0}^{t-1} \frac{w+1+2i+4}{w+1+2i}\\
	&\qquad=w(w+2)(w+1+2t)(w+1+2t+2)\\
	&\qquad\leq (w+2)^2(w+2t+3) \\
	&\qquad=(m_s+2(k-s)+3)^2 (m_s+2(k-s)+1+2(n-(k-s)-1)+3)^2 \\
	&\qquad\leq c k^2 n^2,
}
where~$c=c(m_s, s)$ is a constant. Now applying the same argument as~\eq{eq:ukmombd3}, 
we have that for some constants ~$c', c''$ depending only on~$m_s, s$,
\be{
\IE U_k(n)^{4}\leq c' \sum_{j=0}^4 \IE\bklg{(n^2 k^2)^{j/4}} k^{-j} \leq c'' (n/k)^2.
}
Scaling~$U_k(n)$ in this last equation yields the second assertion.
\end{proof}

\subsection{Continuity of the ordering function}\label{sec:ordcont}

For~$x\in l_p^+$, define the ordering function~$x^\da\in l_p^{+}$
as follows.  Since~$\lim_{i\toinf} x_i = 0$, the sequence has a maximum. Let~$y_1$ be the value of that maximum, remove it from the sequence~$x$, and repeat, assigning consecutive values to~$y_2$,~$y_3$ 
and so forth. Then, set~$x^\da:= y$. Note that the ordering function is not just a permutation of the coordinates:
~$(1/2,0,1/4,0,1/8,0,\dots)^{\da} = (1/2,1/4,1/8,\dots)$. To see that~$ x^\da \in l_p^+$, note that
 ~$\norm{x^\da}_p = \norm{x}_p$, since every positive value appearing in~$x$ will appear 
  in~$x^\da$, and zero values in~$x$ do not contribute to the norm.

\begin{lemma}\label{lem:ordermap}
For any~$1\leq p< \infty$, the ordering function is continuous in~$l_p^+$.
\end{lemma}
\begin{remark}
As pointed out to us by a referee, the ordering function is in fact~$1$-Lipschitz in~$l_p$; this follows from \cite[Theorem~3.5]{Lieb2001} applied to step functions. We present the lemma and proof for the sake of completeness.
\end{remark}
\begin{proof}[Proof of Lemma~\ref{lem:ordermap}] 
Since~$l_p^+$ is a metric space, it is enough to show that the ordering function is sequentially continuous. 
Assume~$x(n)\to x$ in~$l_p^+$; that is,~$\norm{x(n)-x}_p\to0$
as~$n\to\infty$. For~$\alpha>0$
 (to be chosen later), define the set of indices 
\be{
	M_\alpha = M_\alpha(x)= \bklg{i\,:\,x_i>\alpha};
}
since~$\lim_{i\toinf} x_i = 0$, the set~$M_\alpha$ is finite for any positive~$\alpha$. Define the sequences~$x'$ and~$x''$ by~$x'_i = x_i\I[i\in M_\alpha]$ and~$x'' = x - x'$. Moreover, for each~$n$, define the two sequences~$x'(n)$ and~$x''(n)$ by~$x_i'(n) = x_i(n)\I[i\in M_\alpha]$ and~$x''(n)= x(n) - x'(n)$ (note that the sequence~$x(n)$ is decomposed with respect to~$M_\alpha$, which depends on~$x$ only). 

Now, fix~$\eps>0$, and choose~$\alpha$ such that
\be{
	\norm{x''}_p \leq \frac{\eps}{4},
}
and it is moreover possible to choose~$\alpha$ such that 
\ben{\label{431}
	\max_{i\not\in M_\alpha}x_i< \alpha < \min_{i\in M_\alpha} x_i.
}
Clearly, the first~$\abs{M_\alpha}$ elements of~$x^\da$ are just the ordered non-zero elements of~$x'$. Now, choose~$N$ large enough that
\ba{
	(i) & \enskip \text{$\norm{x - x(n)}_p\leq \eps/4$ for all~$n\geq N$,} \\
	(ii) & \enskip \text{$x_i(n) > \alpha$ for all~$i\in M_\alpha$ all~$n\geq N$,} \\ 	
	(iii) & \enskip \text{$x_i(n) < \alpha$ for all~$i\not\in M_\alpha$ and all~$n\geq N$, and} 	\\
	(iv) & \enskip \text{$\norm{(x')^\da - (x'(n))^\da}_p \leq \eps/4$ for all~$n\geq N$.}
}
Condition~$(i)$ can be achieved because~$x(n)\to x$; Conditions~$(ii)$ and~$(iii)$ can be achieved because~$x(n)\to x$ implies uniform coordinate-wise convergence and because of \eq{431}; Condition~$(iv)$ can be achieved again by uniform coordinate-wise convergence and the fact that the order-statistics of finitely many values can be approximated to any arbitrary precision. 
It follows that, for any~$n\geq N$, 
\bes{
	\norm{x^\da -  (x(n))^\da}_p 
	&\leq \norm{ (x')^\da - (x'(n))^\da}_p + \norm{ x''}_p + \norm{ x''(n)}_p 
	\leq \frac{\eps}{4} + \frac{\eps}{4} + \frac{\eps}{2} = \eps.\qedhere
}
\end{proof}

\subsection{Proof of Theorem~\ref{THM:ORDERSTAT}}\label{sec:ORDERSTATproof}

\begin{proof}
We first argue that almost surely~$Y\in l_p^+$ for~$p>\ell/(\ell+1)$.
First note that~$Y_i=Z_i-Z_{i-1}$ and, according to Theorem~\ref{THM2}, 
for~$i> s$,~$Z_i=\GG(a_s+(\ell+1)(i-s))$ and~$Z_{i-1}=B_{i-1} Z_i$,
where~$B_{i-1}\sim\B(a_s+(\ell+1)(i-s-1),1)$ and is independent of~$Z_i$.
Thus we can write~$Y_i= Z_i (1-B_{i-1})$
and note that~$(1-B_{i-1})\sim \B(1, a_s+(\ell+1)(i-s-1))$. 
From standard moment formula for gamma and beta distributions,
we have 
\bes{
\IE Y_i^p&=\frac{\Gamma(\frac{a_s}{\ell+1}+i-s+\frac{p}{\ell+1})}{\Gamma(\frac{a_s}{\ell+1}+i-s)}
			\frac{\Gamma(1+p)\Gamma( a_s+(\ell+1)(i-s-1)+1)}{\Gamma( a_s+(\ell+1)(i-s-1)+1+p)}\\
		&\leq C i^{-p\ell/(\ell+1)},		
}
where~$C=C(p,\ell,a_s,s)$ is a constant and the inequality follow from
$x^t \Gamma(x)/\Gamma(x+t)\to1$ as~$x\to\infty$. 
Clearly we can extend the inequality to all~$i\geq1$.
Therefore
\be{
\IE \sum_{i=1}^\infty Y_i^p = \sum_{i=1}^\infty \IE Y_i^p\leq C \sum_{i=1}^\infty i^{-p\ell/(\ell+1)},
}
and this last term is finite for~$p>(\ell+1)/\ell$. Since the expectation of the sum is finite,
the sum is almost surely finite.

The tightness of~$(\tilde D(n))_{n\geq 1}$ 
follows by applying the moment bounds of Lemma~\ref{lem:tightmombd}
in Lemma~\ref{lem:tightcrit} and then convergence to the claimed limit 
follows from the the convergence of finite dimensional distributions given by Theorem~\ref{THM2}.

Finally,~$(\tilde D(n)^\da)_{n\geq1}$ converges to~$\tilde Y^\da$ 
because of the convergence of~$(\tilde D(n))_{n\geq1}$,
the continuity of the order mapping given by Lemma~\ref{lem:ordermap}.
\end{proof}

\section*{Acknowledgments}
We thank Jim Pitman for the suggestion to study joint degree distributions for preferential attachment graphs and for pointers to the CRT literature, Sourav Chatterjee for the suggestion to look at the order statistics of the process,
Rongfeng Sun for helpful discussions,
and the anonymous referees for their valuable comments. 
EP thanks the Department of Mathematics and Statistics at the University of Melbourne for their hospitality during
a visit when much of this
work was completed, supported by a UofM ECR grant.
AR was supported by NUS Research Grant R-155-000-124-112.
NR was supported by ARC grant DP150101459.

\setlength{\bibsep}{0.5ex}
\def\bibfont{\small}


\begin{thebibliography}{28}
\providecommand{\natexlab}[1]{#1}
\providecommand{\url}[1]{\texttt{#1}}
\expandafter\ifx\csname urlstyle\endcsname\relax
  \providecommand{\doi}[1]{doi: #1}\else
  \providecommand{\doi}{doi: \begingroup \urlstyle{rm}\Url}\fi

\bibitem[Aldous(1991)]{Aldous1991}
D.~J. Aldous (1991).
\newblock The continuum random tree. {I}.
\newblock \emph{Ann. Probab.} \textbf{19}, \penalty0 1--28.

\bibitem[Aldous(1993)]{Aldous1993}
D.~J. Aldous (1993).
\newblock The continuum random tree. {III}.
\newblock \emph{Ann. Probab.} \textbf{21}, \penalty0 248--289.

\bibitem[Antunovi\'c et~al.(2016)Antunovi\'c, Mossel and
  R{\'a}cz]{Antunovic2013}
T.~Antunovi\'c, E.~Mossel and M.~R{\'a}cz (2016).
\newblock Coexistence in preferential attachment networks.
\newblock \emph{Combin. Probab. Comput.}pages 1--26.

\bibitem[Barab{\'a}si and Albert(1999)]{Barabasi1999}
A.-L. Barab{\'a}si and R.~Albert (1999).
\newblock Emergence of scaling in random networks.
\newblock \emph{Science} \textbf{286}, \penalty0 509--512.

\bibitem[Berger et~al.(2014)Berger, Borgs, Chayes, Saberi et~al.]{Berger2014}
N.~Berger, C.~Borgs, J.~T. Chayes, A.~Saberi et~al. (2014).
\newblock Asymptotic behavior and distributional limits of preferential
  attachment graphs.
\newblock \emph{Ann. Probab.} \textbf{42}, \penalty0 1--40.

\bibitem[Bollob{\'a}s et~al.(2001)Bollob{\'a}s, Riordan, Spencer and
  Tusn{\'a}dy]{Bollobas2001}
B.~Bollob{\'a}s, O.~Riordan, J.~Spencer and G.~Tusn{\'a}dy (2001).
\newblock The degree sequence of a scale-free random graph process.
\newblock \emph{Random Structures Algorithms} \textbf{18}, \penalty0 279--290.

\bibitem[Bubeck et~al.(2015)Bubeck, Mossel and R{\'a}cz]{Bubeck2014}
S.~Bubeck, E.~Mossel and M.~Z. R{\'a}cz (2015).
\newblock On the influence of the seed graph in the preferential attachment
  model.
\newblock \emph{Network Science and Engineering, IEEE Transactions on}
  \textbf{2}, \penalty0 30--39.

\bibitem[Collevecchio et~al.(2013)Collevecchio, Cotar and
  LiCalzi]{Collevecchio2013}
A.~Collevecchio, C.~Cotar and M.~LiCalzi (2013).
\newblock On a preferential attachment and generalized {P}olya's urn model.
\newblock \emph{Ann. Appl. Probab.} \textbf{23}, \penalty0 1219--1253.

\bibitem[Curien et~al.(2015)Curien, Duquesne, Kortchemski and
  Manolescu]{Curien2014}
N.~Curien, T.~Duquesne, I.~Kortchemski and I.~Manolescu (2015).
\newblock Scaling limits and influence of the seed graph in preferential
  attachment trees.
\newblock \emph{J. \'Ec. polytech. Math.} \textbf{2}, \penalty0 1--34.
\newblock ISSN 2429-7100.

\bibitem[Feller(1968)]{Feller1968}
W.~Feller (1968).
\newblock \emph{An introduction to probability theory and its applications.
  {V}ol. {I}}.
\newblock John Wiley \& Sons Inc., New York, third edition edition.

\bibitem[Goldstein et~al.(2013)Goldstein, Reinert et~al.]{Goldstein2013}
L.~Goldstein, G.~Reinert et~al. (2013).
\newblock Stein's method for the beta distribution and the
  {P}{\'o}lya-{E}ggenberger urn.
\newblock \emph{Journal of Applied Probability} \textbf{50}, \penalty0
  1187--1205.

\bibitem[James(2015)]{James2015}
L.~F. James (2015).
\newblock Generalized mittag leffler distributions arising as limits in
  preferential attachment models.
\newblock \emph{arXiv}.
\newblock Available at \url{arXiv:1509.07150}.

\bibitem[Janson(2006)]{Janson2006}
S.~Janson (2006).
\newblock Limit theorems for triangular urn schemes.
\newblock \emph{Probab. Theory Related Fields} \textbf{134}, \penalty0
  417--452.

\bibitem[Krapivsky et~al.(2000)Krapivsky, Redner and Leyvraz]{Krapivsky2000}
P.~L. Krapivsky, S.~Redner and F.~Leyvraz (2000).
\newblock Connectivity of growing random networks.
\newblock \emph{Physical review letters} \textbf{85}, \penalty0 4629.

\bibitem[Lieb and Loss(2001)]{Lieb2001}
E.~H. Lieb and M.~Loss (2001).
\newblock \emph{Analysis}, volume~14 of \emph{Graduate Studies in Mathematics}.
\newblock American Mathematical Society, Providence, RI, second edition.
\newblock ISBN 0-8218-2783-9.

\bibitem[M{\'o}ri(2005)]{Mori2005}
T.~F. M{\'o}ri (2005).
\newblock The maximum degree of the {B}arab\'asi-{A}lbert random tree.
\newblock \emph{Combin. Probab. Comput.} \textbf{14}, \penalty0 339--348.

\bibitem[Newman et~al.(2006)Newman, Barab{\'a}si and Watts]{Newman2006}
M.~Newman, A.-L. Barab{\'a}si and D.~J. Watts (2006).
\newblock \emph{The structure and dynamics of networks}.
\newblock Princeton University Press.

\bibitem[Newman(2003)]{Newman2003}
M.~E.~J. Newman (2003).
\newblock The structure and function of complex networks.
\newblock \emph{SIAM review}.

\bibitem[Pek{\"o}z et~al.(2013{\natexlab{a}})Pek{\"o}z, R{\"o}llin and
  Ross]{Pekoz2013}
E.~Pek{\"o}z, A.~R{\"o}llin and N.~Ross (2013{\natexlab{a}}).
\newblock Degree asymptotics with rates for preferential attachment random
  graphs.
\newblock \emph{Ann. Appl. Probab.} \textbf{23}, \penalty0 1188--1218.

\bibitem[Pek{\"o}z et~al.(2013{\natexlab{b}})Pek{\"o}z, R{\"o}llin and
  Ross]{Pekoz2013a}
E.~Pek{\"o}z, A.~R{\"o}llin and N.~Ross (2013{\natexlab{b}}).
\newblock Total variation error bounds for geometric approximation.
\newblock \emph{Bernoulli} \textbf{19}, \penalty0 610--632.

\bibitem[Pek{\"o}z et~al.(2016)Pek{\"o}z, R{\"o}llin and Ross]{Pekoz2013b}
E.~A. Pek{\"o}z, A.~R{\"o}llin and N.~Ross (2016).
\newblock Generalized gamma approximation with rates for urns, walks and trees.
\newblock \emph{Ann. Probab.} \textbf{44}, \penalty0 1776--1816.

\bibitem[Pemantle(2007)]{Pemantle2007}
R.~Pemantle (2007).
\newblock A survey of random processes with reinforcement.
\newblock \emph{Probab. Surv.} \textbf{4}, \penalty0 1--79.

\bibitem[Pitman(2006)]{Pitman2006}
J.~Pitman (2006).
\newblock \emph{Combinatorial {S}tochastic {P}rocesses}, volume 1875 of
  \emph{Lecture Notes in Mathematics}.
\newblock Springer-Verlag, Berlin.

\bibitem[Pitman(1999)]{Pitman1999c}
J.~Pitman (1999).
\newblock Brownian motion, bridge, excursion, and meander characterized by
  sampling at independent uniform times.
\newblock \emph{Electron. J. Probab.} \textbf{4}, \penalty0 1--33.

\bibitem[Ross(2013)]{Ross2013}
N.~Ross (2013).
\newblock Power laws in preferential attachment graphs and {S}tein's method for
  the negative binomial distribution.
\newblock \emph{Adv. Appl. Probab.} \textbf{45}, \penalty0 876--893.

\bibitem[Rudas et~al.(2007)Rudas, T{\'o}th and Valk{\'o}]{Rudas2007}
A.~Rudas, B.~T{\'o}th and B.~Valk{\'o} (2007).
\newblock Random trees and general branching processes.
\newblock \emph{Random Structures Algorithms} \textbf{31}, \penalty0 186--202.

\bibitem[Suquet(1999)]{Suquet1999}
C.~Suquet (1999).
\newblock Tightness in {S}chauder decomposable {B}anach spaces.
\newblock In \emph{Proceedings of the {S}t. {P}etersburg {M}athematical
  {S}ociety, {V}ol. {V}}, volume 193, pages 201--224.

\bibitem[van~der Hofstad(2013)]{Hofstad2013}
R.~van~der Hofstad (2013).
\newblock \emph{Random graphs and complex networks}.
\newblock Manuscript.

\end{thebibliography}

\end{document}